\documentclass[final,3p,times]{elsarticle}

\usepackage{amssymb}
\usepackage{amsthm}
\usepackage{amsmath}

\journal{Applied and Computational Harmonic Analysis}

\newcommand{\bbC}{\mathbb{C}}
\newcommand{\bbR}{\mathbb{R}}
\newcommand{\bbZ}{\mathbb{Z}}
\newcommand{\bbH}{\mathbb{H}}
\newcommand{\bbS}{\mathbb{S}}

\newcommand{\cI}{\mathcal{I}}
\newcommand{\cJ}{\mathcal{J}}
\newcommand{\rme}{\mathrm{e}}
\newcommand{\rmi}{\mathrm{i}}
\newcommand{\rmE}{\mathrm{E}}
\newcommand{\rmI}{\mathrm{I}}
\newcommand{\rmT}{\mathrm{T}}
\newcommand{\Tr}{\mathrm{Tr}}

\newcommand{\real}{\mathrm{Re}}
\newcommand{\HS}{\mathrm{HS}}
\newcommand{\FP}{\mathrm{FP}}

\newcommand{\abs}[1]{|{#1}|}
\newcommand{\bigabs}[1]{\bigl|{#1}\bigr|}

\newcommand{\biggabs}[1]{\biggl|{#1}\biggr|}

\newcommand{\paren}[1]{({#1})}
\newcommand{\bigparen}[1]{\bigl({#1}\bigr)}
\newcommand{\Bigparen}[1]{\Bigl({#1}\Bigr)}
\newcommand{\biggparen}[1]{\biggl({#1}\biggr)}

\newcommand{\bigbracket}[1]{\bigl[{#1}\bigr]}
\newcommand{\Bigbracket}[1]{\Bigl[{#1}\Bigr]}
\newcommand{\biggbracket}[1]{\biggl[{#1}\biggr]}

\newcommand{\set}[1]{\{{#1}\}}
\newcommand{\bigset}[1]{\bigl\{{#1}\bigr\}}

\newcommand{\biggset}[1]{\biggl\{{#1}\biggr\}}

\newcommand{\norm}[1]{\|{#1}\|}
\newcommand{\bignorm}[1]{\bigl\|{#1}\bigr\|}

\newcommand{\biggnorm}[1]{\biggl\|{#1}\biggr\|}

\newcommand{\ip}[2]{\langle{#1},{#2}\rangle}

\newcommand{\Bigip}[2]{\Bigl\langle{#1},{#2}\Bigr\rangle}
\newcommand{\biggip}[2]{\biggl\langle{#1},{#2}\biggr\rangle}

\newlength{\wsla}
\newlength{\wslb}
\newlength{\wslc}
\newlength{\wsld}

\newtheorem{thm}{Theorem}

\newtheorem{prop}[thm]{Proposition}
\newtheorem{cor}[thm]{Corollary}
\newproof{pf}{Proof}

\theoremstyle{definition}
\newtheorem{exmp}[thm]{Example}
\newtheorem{defn}[thm]{Definition}

\begin{document}
\begin{frontmatter}
\title{Auto-tuning unit norm frames}

\author[MO]{Peter G.~Casazza}
\author[AFIT]{Matthew Fickus}
\ead{Matthew.Fickus@afit.edu}
\author[Princeton]{Dustin G.~Mixon}

\address[MO]{Department of Mathematics, University of Missouri, Columbia, Missouri 65211, USA}
\address[AFIT]{Department of Mathematics and Statistics, Air Force Institute of Technology, Wright-Patterson Air Force Base, Ohio 45433, USA}
\address[Princeton]{Program in Applied and Computational Mathematics , Princeton University, Princeton, New Jersey 08544, USA}

\begin{abstract}
Finite unit norm tight frames provide Parseval-like decompositions of vectors in terms of redundant components of equal weight.  They are known to be exceptionally robust against additive noise and erasures, and as such, have great potential as encoding schemes.  Unfortunately, up to this point, these frames have proven notoriously difficult to construct.  Indeed, though the set of all unit norm tight frames, modulo rotations, is known to contain manifolds of nontrivial dimension, we have but a small finite number of known constructions of such frames.  In this paper, we present a new iterative algorithm---gradient descent of the frame potential---for increasing the degree of tightness of any finite unit norm frame.  The algorithm itself is trivial to implement, and it preserves certain group structures present in the initial frame.  In the special case where the number of frame elements is relatively prime to the dimension of the underlying space, we show that this algorithm converges to a unit norm tight frame at a linear rate, provided the initial unit norm frame is already sufficiently close to being tight.  By slightly modifying this approach, we get a similar, but weaker, result in the non-relatively-prime case, providing an explicit answer to the Paulsen problem: ``How close is a frame which is almost tight and almost unit norm to some unit norm tight frame?''
\end{abstract}

\begin{keyword}
frames \sep finite \sep tight \sep unit norm \sep frame potential \sep gradient descent
\end{keyword}
\end{frontmatter}


\section{Introduction}

\textit{Frames} provide numerically stable methods for finding overcomplete decompositions of vectors, and are ubiquitous in signal processing applications~\cite{KovacevicC:07a,KovacevicC:07b}.  As explained below, \textit{tight frames} and \textit{unit norm} frames are particularly useful.  However, it is difficult to construct frames which possess both of these properties simultaneously, called \textit{unit norm tight frames} (UNTFs).  In this paper, we present a new method for overcoming this difficulty, namely an iterative procedure which, when applied to a given finite unit norm frame, asymptotically produces a UNTF.  To be precise, under the additional assumptions that the number of frame vectors is relatively prime to the dimension of the underlying space and that our initial unit norm frame is sufficiently close to being tight, we are able to show that our method, namely a gradient descent of the \textit{frame potential}, converges to a UNTF at a linear rate.  That is, from a tightness perspective, our algorithm takes a good unit norm frame and makes it perfect.  As such, it can be viewed as a frame-theoretic analog of \textit{Auto-Tune}\texttrademark, the software commonly used in the music industry to perfect the pitch of lesser vocalists.  Moreover, in the non-relatively-prime case, we can slightly modify our argument to yield an explicit answer to the \textit{Paulsen problem}~\cite{BodmannC:10}: 
\begin{equation*}
\text{\emph{``How close is a frame which is almost tight and almost unit norm to some UNTF?''}}
\end{equation*}

To make these notions precise, consider the \textit{synthesis operator} of a sequence of vectors $F=\set{f_n}_{n=1}^N$ in a real or complex $M$-dimensional Hilbert space $\bbH_M$, namely $F:\bbC^N\rightarrow\bbH_M$, $\smash{Fg:=\sum_{n=1}^N g(n)f_n}$.  That is, viewing $\bbH_M$ as $\bbR^M$ or $\bbC^M$, $F$ is the $M\times N$ matrix whose columns are the $f_n$'s.  Note that here and throughout, we make no notational distinction between the vectors themselves and the synthesis operator they induce.  The vectors $F$ are said to be a \textit{frame} for $\bbH_M$ if there exists \textit{frame bounds} $0<A\leq B<\infty$ such that $A\norm{f}^2\leq\norm{F^*f}^2\leq B\norm{f}^2$ for all $f\in\bbH_M$.  In this finite-dimensional setting, having $F$ be a frame is equivalent to having the $f_n$'s span $\bbH_M$, necessitating $M\leq N$, with the optimal frame bounds $A$ and $B$ corresponding to the least and greatest eigenvalues of $FF^*$.  In particular, $F$ is a \textit{tight frame} when $A=B$, that is, when $FF^*=A\rmI$.  Tight frames are useful in applications, as they provide Parseval-like decompositions
\begin{equation}
\label{equation.definition of tight frame}
f
=\tfrac1AFF^*f
=\tfrac1A\sum_{n=1}^N\ip{f}{f_n}f_n,\quad \forall f\in\bbH_M,
\end{equation}
despite the fact that the $f_n$'s are not required to be independent.  Indeed, the tightness condition $FF^*=A\rmI$ does not require the columns of $F$, that is, the $f_n$'s, to be orthogonal, but rather, it requires the rows of $F$ to be orthogonal and have equal norm $\sqrt{A}$.  Meanwhile, $F$ is a \textit{unit norm} frame when $\norm{f_n}=1$ for all $n=1,\dotsc,N$.  When a frame is both unit norm and tight---a UNTF---it breaks vectors into possibly redundant components of equal weight~\eqref{equation.definition of tight frame}, with the tight frame constant $A$ being the redundancy $\frac NM$.  UNTFs are known to be exceptionally robust against additive noise and erasures~\cite{CasazzaK:03,GoyalKK:01,GoyalVT:98,HolmesP:04}.  Unfortunately, UNTFs are also notoriously difficult to construct: we want $M\times N$ matrices $F$ that have unit norm columns and orthogonal rows of equal squared-norm $\frac NM$.  To be clear, UNTFs are known to exist for any $M\leq N$: one may either invoke the classical theory of \textit{majorization} for matrices, or more simply, consider the \textit{harmonic frame} obtained by truncating an $N\times N$ discrete Fourier transform (DFT) matrix~\cite{GoyalKK:01}.  Another technique is to build an operator with a flat spectrum using weighted DFT blocks; this \textit{spectral tetris} method yields extremely sparse UNTFs~\cite{CasazzaFMWZ:10}.  However, these techniques only produce certain examples of UNTFs, while the set of all UNTFs, modulo rotations, contains nontrivial manifolds whenever $N>M+1$~\cite{DykemaS:06}.  That is, these methods produce but a few samples from the continuum.

In this paper, we provide a new method for starting with a given frame and producing a nearby UNTF from it.  Such techniques are very useful in real-world problems, as they allow one to take a given transform, carefully crafted to have certain application-specific properties without being tight and/or unit norm, and to correct, or \textit{tune}, its algebraic properties while changing the transform itself as little as possible.  In terms of mathematics, these techniques are important because they help in solving the Paulsen problem.  To be precise, a compactness argument of D.~Hadwin~\cite{BodmannC:10} shows that indeed, if a frame is sufficiently close to being both tight and unit norm, then it is, in fact, close to a UNTF.  Current work on this problem therefore focuses on \textit{how} close these UNTFs are, as well as developing practical schemes to obtain them.  Unfortunately, finitely-iterative techniques using Givens rotations~\cite{CasazzaL:06,HolmesP:04} have, to this point, produced UNTFs that are not necessarily close to the originals.

More recent approaches to solving the Paulsen problem, namely that of~\cite{BodmannC:10} and the present method, rely upon the fact that given any frame $F$, it is straightforward to produce a unit norm frame from it: simply replace each $f_n$ with $\tfrac{f_n}{\norm{f_n}}$.  Moreover, one can also convert any frame into a tight frame, provided one has the computational power to take the inverse square root of the frame operator: consider $(FF^*)^{-\frac12}F$.  However, combining these two operations---dividing by the root of the frame operator and then normalizing the resulting vectors, or vice versa---does not yield UNTFs, as these two operations do not commute.  Nevertheless, by using one of these two techniques, one may assume without loss of generality~\cite{BodmannC:10} that either the initial frame is exactly tight and nearly unit norm or, alternatively, that the initial frame is exactly unit norm and nearly tight.  The former approach is that taken by~\cite{BodmannC:10}: starting with a tight frame that is not unit norm, they solve a differential equation that minimizes \textit{frame energy} while preserving tightness, flowing towards a UNTF; this led to the first genuine solution to the Paulsen problem in the special case where $M$ and $N$ are relatively prime.  The latter approach is the one we pursue here.

In particular, starting with a frame that is already unit norm, we try to produce a UNTF from it.  Preliminary results to this end were reported in the conference proceedings paper~\cite{CasazzaF:09b}.  We accomplish this task by descending against the gradient of the \textit{frame potential}, namely the square of the Hilbert-Schmidt norm of the Gram matrix $F^*F$, regarded as a function over $N$ copies of the unit sphere $\mathbb{S}_M:=\{f\in\bbH_M:\norm{f}=1\}$:
\begin{equation*}
\FP:\mathbb{S}_M^N\rightarrow\mathbb{R}, \quad \FP(F)=\norm{F^*F}_\HS^2=\sum_{n=1}^N\sum_{n'=1}^N\abs{\ip{f_n}{f_{n'}}}^2.
\end{equation*}
Introduced in~\cite{BenedettoF:03}, the frame potential is the total potential energy contained within a given collection of points on the sphere under the action of a \textit{frame force} which encourages orthogonality.  As discussed in the next section, one can show that $\smash{\FP(F)=\frac{N^2}M+\norm{FF^*-\tfrac NM\rmI}_\HS^2}$ for any $F\in\mathbb{S}_M^N$.  That is, the frame potential is bounded below by $\smash{\tfrac{N^2}M}$, with equality if and only if $F$ is a UNTF.  The main result of~\cite{BenedettoF:03} gives that even \textit{local} minimizers of $\FP$ are UNTFs.  As such, even if no explicit constructions of such frames were known, they must exist: $\FP$ is a continuous function over the compact set $\mathbb{S}_M^N$, and as such, possesses a global minimizer, which is necessarily a local minimizer, which is necessarily a UNTF.  This existence argument has been generalized to numerous other settings~\cite{CasazzaF:09,CasazzaFKLT:06,FickusJKO:05,JohnsonO:08,Massey:09,MasseyR:10,MasseyRS:10}.  Moreover, this fact implies that every local minimizer of $\FP$ is necessarily a global minimizer, which is a nice property to have when performing gradient descent; even here, this task is nontrivial however, as there are nonoptimal arrangements at which the first derivative of the frame potential vanishes~\cite{BenedettoF:03}.

The novelty and significance of our work is best gauged by contrasting it with the current state-of-the-art of the Paulsen problem: the technique of~\cite{BodmannC:10}.  Both approaches give valid solutions to the Paulsen problem and have certain applications for which they are preferable to the other.  Instead of assuming our frame is already tight and seeking to become increasingly unit norm~\cite{BodmannC:10}, we assume we are already unit norm and seek tightness.  Rather than needing to solve a differential equation~\cite{BodmannC:10}, we have an iterative, gradient-descent-based algorithm; our approach only becomes a differential equation when the step size is forced arbitrarily small.  While the relative primeness of $M$ and $N$ is an important consideration in both methods, the technique of~\cite{BodmannC:10} is only guaranteed to converge in this case, while our convergence argument generalizes to the non-relatively-prime case, albeit in a weaker form.  Also, as shown below, our method preserves the group structure of certain UNTF constructions, such as Gabor frames and filter banks, whereas~\cite{BodmannC:10} does not.

In the next section, we introduce the fundamental concepts needed to compute the gradient of the frame potential (Theorem~\ref{theorem.gradient of the frame potential}) and study its group invariance properties (Proposition~\ref{proposition.group invariance}).  In Section 3, we find sufficient conditions that guarantee that gradient descent of the frame potential converges to a UNTF at a linear rate (Theorem~\ref{theorem.linear convergence}).  In the fourth and final section, we show that these sufficient conditions are indeed met provided $M$ and $N$ are relatively prime and the initial frame is already sufficient tight, yielding an answer to the Paulsen problem in this case (Corollary~\ref{corollary.relatively prime}).  We further discuss how these arguments generalize to the non-relatively-prime case (Theorem~\ref{theorem.not relatively prime}).

\section{The gradient of the frame potential}

In this section, we lay the groundwork for our approach to modify a given unit norm frame so as to decrease its distance from tightness.  As such, our first priority is to formally define this distance.  Let $\set{\lambda_m}_{m=1}^{M}$ be the eigenvalues of the frame operator $FF^*$ of some unit norm sequence  $F=\set{f_n}_{n=1}^{N}$.  Note that since
\begin{equation*}
\sum_{m=1}^{M}\lambda_m=\Tr(FF^*)=\Tr(F^*F)=\sum_{n=1}^{N}\norm{f_n}^2=N,
\end{equation*}
the average value of these eigenvalues is $\frac NM$.  Moreover, $F$ is a UNTF if and only if $FF^*=\frac NM\rmI$, that is, if and only if all the $\lambda_m$'s are equal to $\frac MN$.  As such, in the past, the \textit{distance from tightness} of a unit norm frame $F$ has usually been defined as $\smash{\max_m\abs{\lambda_m-\frac NM}}$.  However, as there is no closed-form expression for eigenvalues exist, we propose an alternative measure of tightness, namely the $2$-norm of the values $\set{\lambda_m-\frac NM}_{m=1}^M$:
\begin{equation}
\label{equation.distance from tightness}
\sum_{m=1}^{M}\bigparen{\lambda_m-\tfrac NM}^2
=\bignorm{FF^*-\tfrac NM\rmI}_\HS^2
=\Tr\bigbracket{(FF^*)^2}-2\tfrac NM\Tr(FF^*)+\tfrac{N^2}{M^2}\Tr(\rmI)
=\FP(F)-\tfrac{N^2}M.
\end{equation}
In particular, we see that $\FP(F)\geq\tfrac{N^2}M$, with equality if and only if $F$ is a UNTF.  It therefore makes sense to define our notion of the \textit{distance from tightness} of $F$ to be the easily computable quantity $\smash{\norm{FF^*-\tfrac NM\rmI}_\HS=\bigparen{\FP(F)-\tfrac{N^2}M}^{\frac12}}$.  Written in this language, the version of the Paulsen problem on which we focus is the following:  
\begin{quote}
\textit{Given positive integers $M$ and $N$, find possibly $(M,N)$-dependent constants $\delta$, $C$ and $\alpha$ such that given any unit norm sequence $F$ such that $\norm{FF^*-\tfrac NM\rmI}_\HS\leq\delta$, there necessarily exists a UNTF $\tilde{F}$ such that}
\begin{equation}
\label{equation.our version of Paulsen}
\norm{\tilde{F}-F}_\HS\leq C\bignorm{FF^*-\tfrac NM\rmI}_\HS^\alpha.
\end{equation}
\end{quote}

One way to get a ballpark estimate on what these parameters $\delta$, $C$ and $\alpha$ should be, under the best possible circumstances, is to solve a weaker problem: given a unit norm frame $F$, find $\tilde{F}$ such that $\tilde{F}\tilde{F}^*=\frac NM\rmI$ and such that $\smash{\norm{\tilde{F}-F}_\HS}$ is minimized; here, we do not require that $\tilde{F}$ be unit norm.  Similar problems have been extensively studied in the past---see \cite{BodmannC:10} for references.  In brief, we have that for any such $\tilde{F}$ and $F$, $\smash{\norm{\tilde{F}-F}_\HS^2=2N-2\real\Tr(\tilde{F}^*F)}$.  Taking the singular value decomposition $F=U\Sigma V$ and letting $\tilde{\Sigma}=U^*\tilde{F}V^*$ so that $\tilde{F}=U\tilde{\Sigma}V$, we are therefore seeking to maximize $\real\Tr(\tilde{F}^*F)=\real\Tr(\tilde{\Sigma}^*\Sigma)$ subject to the restriction that $\tilde{\Sigma}\tilde{\Sigma}^*=\frac NM\rmI$.  As $\Sigma$ is ``diagonal," this maximum is achieved by letting $\tilde{\Sigma}$ also be ``diagonal" with entries $\smash{(\tfrac NM)^{\frac12}}$, implying
\begin{equation*}
\norm{\tilde{F}-F}_{\HS}^2
=2N-2\real\Tr(\tilde{\Sigma}^*\Sigma)
\geq 2N-2(\tfrac NM)^{\frac12}\sum_{m=1}^{M}\lambda_m^{\frac12}
=\sum_{m=1}^{M}\Bigbracket{\lambda_m^{\frac12}-(\tfrac NM)^{\frac12}}^2.
\end{equation*}
Multiplying the terms in these summands by their conjugates $\smash{\lambda_m^{\frac12}+(\tfrac NM)^{\frac12}}$ then yields
\begin{equation*}
\norm{\tilde{F}-F}_{\HS}^2
\geq\sum_{m=1}^{M}\frac{\Bigparen{\lambda_m-\tfrac NM}^2}{\Bigbracket{\lambda_m^{\frac12}+(\tfrac NM)^{\frac12}}^2}
\geq\tfrac{M}{N}\sum_{m=1}^{M}(\lambda_m-\tfrac NM)^2
=\tfrac{M}{N}\bignorm{FF^*-\tfrac NM\rmI}_\HS^2.
\end{equation*}
To summarize, the UNTF $\tilde{F}$ which is closest to $F$ necessarily satisfies $\smash{\norm{\tilde{F}-F}_{\HS}\geq(\tfrac MN)^{\frac12}}\norm{FF^*-\tfrac NM\rmI}_\HS$.  As such, in our version of the Paulsen problem~\eqref{equation.our version of Paulsen}, the best $\alpha$ we should expect is $\alpha=1$.  Indeed, in the case where $M$ and $N$ are relatively prime, we show that $\alpha=1$ is achievable, provided $\delta$ and $C$ are suitably chosen.  Meanwhile, when $M$ and $N$ have a common divisor, a simple example, given in Section 4, shows that the best one can expect is $\alpha=\frac12$.  As we shall see, the key issue with the non-relatively-prime case is that there exist UNTFs which can be partitioned into mutually orthogonal subcollections; at such frames, the geometric structure of the set of surrounding UNTFs is extremely complicated~\cite{DykemaS:06}.

\subsection{The gradient of the frame potential}

Now that we have formally defined the distance from tightness of a unit norm frame $F$ to be $\norm{FF^*-\frac NM\rmI}_\HS$, and having further posed the problem we are trying to solve with~\eqref{equation.our version of Paulsen}, we turn to our specific approach:  a gradient descent of the squared distance from tightness, which, since $\smash{\norm{FF^*-\frac NM\rmI}_\HS^2=\FP(F)-\tfrac{N^2}M}$, reduces to a gradient descent of the frame potential.  Here, as the domain of optimization $\mathbb{S}_M^N$ is a product of spheres as opposed to the entire space $\bbH_M^N$, this version of gradient descent differs from the one most commonly used.  In particular, given $F=\set{f_n}_{n=1}^{N}$ in $\mathbb{S}_M^N$ and $G=\set{g_n}_{n=1}^{N}$ in $\oplus_{n=1}^N f_n^\perp:=\bigset{\set{g_n}_{n=1}^{N}\in\bbH_M^N: \ip{f_n}{g_n}=0,\ \forall n}$, we use Lemma~2 of~\cite{CasazzaF:09} along with Taylor's theorem to estimate the change in frame potential as each $f_n$ is pushed along a great circle with tangent velocity $g_n$:

\begin{prop}
\label{proposition.Taylor's theorem}
For any $F=\set{f_n}_{n=1}^{N}\in \mathbb{S}_M^N$ and $G=\set{g_n}_{n=1}^{N}\in\oplus_{n=1}^N f_n^\perp$, let $\smash{f_n(t):=\cos(\norm{g_n}t)f_n-\sin(\norm{g_n}t)\frac{g_n}{\norm{g_n}}}$ whenever $g_n\neq0$, and let $f_n(t):=f_n$ otherwise. Then, $F(t)=\set{f_n(t)}_{n=1}^{N}\in \mathbb{S}_M^N$ for any $t\in\mathbb{R}$ and satsifies
\begin{align}
\label{equation.Taylor's theorem 1}
\norm{F(t)-F}_\HS^2
&\leq t^2\sum_{n=1}^{N}\norm{g_n}^2,\\
\label{equation.Taylor's theorem 2}
\FP(F(t))
&\leq\FP(F)-4t\real\sum_{n=1}^{N}\ip{FF^*f_n}{g_n}+8Nt^2\sum_{n=1}^{N}\norm{g_n}^2.
\end{align}
\end{prop}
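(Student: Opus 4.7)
I would handle the three claims in order; the first two follow from direct computation, while the third requires a second-order Taylor expansion of $\FP(F(t))$ at $t=0$ with uniformly bounded remainder. For $F(t)\in\bbS_M^N$: for any $n$ with $g_n\neq 0$, expanding $\norm{f_n(t)}^2$ and invoking $\ip{f_n}{g_n}=0$ kills the cross terms and leaves $\cos^2(\norm{g_n}t)+\sin^2(\norm{g_n}t)=1$; the $g_n=0$ case is trivial. The same orthogonality gives $\norm{f_n(t)-f_n}^2=(\cos(\norm{g_n}t)-1)^2+\sin^2(\norm{g_n}t)=2-2\cos(\norm{g_n}t)=4\sin^2(\norm{g_n}t/2)\leq\norm{g_n}^2t^2$, using $\abs{\sin x}\leq\abs{x}$; summing over $n$ yields \eqref{equation.Taylor's theorem 1}.

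For \eqref{equation.Taylor's theorem 2}, I would set $S(t):=F(t)F(t)^*$ so that $\Phi(t):=\FP(F(t))=\Tr(S(t)^2)$ and Taylor-expand $\Phi$ at $0$. Because each $f_n(t)$ traces a great circle, $f_n''(t)=-\norm{g_n}^2 f_n(t)$, giving the clean identities $S'(t)=\sum_n[f_n'(t)f_n(t)^*+f_n(t)f_n'(t)^*]$ and $S''(t)=2\sum_n[f_n'(t)f_n'(t)^*-\norm{g_n}^2 f_n(t)f_n(t)^*]$. A short trace computation combined with the self-adjointness of $FF^*$ then shows $\Phi'(0)=2\Tr(S(0)S'(0))=-4\real\sum_n\ip{FF^*f_n}{g_n}$, matching the linear coefficient in \eqref{equation.Taylor's theorem 2}.

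The main obstacle is controlling the quadratic remainder. By the integral form of Taylor's theorem, $\Phi(t)-\Phi(0)-t\Phi'(0)=\int_0^t(t-s)\Phi''(s)\,ds$, so a uniform upper bound on $\Phi''$ suffices. From $\Phi''(s)=2\Tr(S'(s)^2)+2\Tr(S(s)S''(s))$, the first trace is handled by noting that the naive estimate $\norm{S(s)}_\HS^2=\Phi(s)$ would be circular; instead, the triangle inequality followed by Cauchy--Schwarz gives $\Tr(S'(s)^2)=\norm{S'(s)}_\HS^2\leq\bigparen{2\sum_n\norm{g_n}}^2\leq 4N\sum_n\norm{g_n}^2$. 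For the second trace, the positive semidefiniteness of $S(s)$ lets me discard $-2\sum_n\norm{g_n}^2\ip{S(s)f_n(s)}{f_n(s)}\leq 0$, while $\norm{S(s)}_{\mathrm{op}}\leq\Tr(S(s))=N$ bounds $2\sum_n\ip{S(s)f_n'(s)}{f_n'(s)}\leq 2N\sum_n\norm{g_n}^2$ (using $\norm{f_n'(s)}=\norm{g_n}$), so $2\Tr(S(s)S''(s))\leq 4N\sum_n\norm{g_n}^2$. Hence $\Phi''(s)\leq 12N\sum_n\norm{g_n}^2$ uniformly in $s$, and integrating delivers $\Phi(t)\leq\Phi(0)+t\Phi'(0)+6Nt^2\sum_n\norm{g_n}^2$, which is even sharper than the claimed $8Nt^2$ bound.
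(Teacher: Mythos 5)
Your proof is correct and follows the same overall strategy as the paper's---Taylor-expand $\FP(F(t))$ in $t$ along the great-circle perturbation---but the execution differs in two useful ways. First, where the paper cites Lemma~2 of~\cite{CasazzaF:09} for the formulas for $\dot\varphi$ and $\ddot\varphi$, you derive them from scratch via $S(t)=F(t)F(t)^*$ and the clean relation $\ddot f_n(t)=-\norm{g_n}^2 f_n(t)$, making the argument self-contained. Second, where the paper uses the Lagrange remainder with a two-sided bound $\abs{\ddot\varphi(s)}\leq 16N\sum_n\norm{g_n}^2$, you use the integral remainder $\int_0^t(t-s)\Phi''(s)\,ds = t^2\int_0^1(1-\sigma)\Phi''(t\sigma)\,d\sigma$, where the nonnegative kernel only requires a \emph{one-sided} bound on $\Phi''$; this lets you discard the (favorable) negative term $-2\sum_n\norm{g_n}^2\ip{S(s)f_n(s)}{f_n(s)}\leq 0$ that the paper retains in absolute value. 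The result is the sharper constant $\Phi''(s)\leq 12N\sum_n\norm{g_n}^2$ and hence $6Nt^2$ rather than the paper's $8Nt^2$, which of course still implies the stated inequality. All intermediate steps check out: the cross-term estimate $\norm{S'(s)}_\HS\leq 2\sum_n\norm{g_n}$ followed by Cauchy--Schwarz gives $\Tr(S'(s)^2)\leq 4N\sum_n\norm{g_n}^2$, and $\norm{S(s)}_{\mathrm{op}}\leq\Tr(S(s))=N$ together with $\norm{\dot f_n(s)}=\norm{g_n}$ gives $\Tr(S(s)S''(s))\leq 2N\sum_n\norm{g_n}^2$.
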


\begin{proof}
It is straightforward to show that $\norm{f_n(t)}=1$ for all $n=1,\dotsc,N$ and all $t\in\mathbb{R}$. To show~\eqref{equation.Taylor's theorem 1}, note that for any $n$ such that $g_n\neq0$, we have
\begin{equation}
\label{equation.proof of Taylor's theorem 1}
\norm{f_n(t)-f_n}^2
=\bigparen{\cos(\norm{g_n}t)-1}^2+\sin^2(\norm{g_n}t)
=4\sin^2(\norm{g_n}t/2)
\leq\norm{g_n}^2t^2.
\end{equation}
As~\eqref{equation.proof of Taylor's theorem 1} also immediately holds for any $n$ such that $g_n=0$, we may sum~\eqref{equation.proof of Taylor's theorem 1} over all $n$ to conclude~\eqref{equation.Taylor's theorem 1}.  To prove~\eqref{equation.Taylor's theorem 2}, we apply Taylor's theorem to $\varphi(t)=\FP(F(t))$ at $t=0$:
\begin{equation}
\label{equation.proof of Taylor's theorem 2}
\varphi(t)\leq\varphi(0)+t\dot{\varphi}(0)+\tfrac{1}2t^2\max_{s\in\mathbb{R}}\abs{\ddot{\varphi}(s)}.
\end{equation}
To compute the terms in~\eqref{equation.proof of Taylor's theorem 2}, note that $\dot{f}_n(t)=-\norm{g_n}\sin(\norm{g_n}t)f_n-\cos(\norm{g_n}t)g_n$ for any $n$ such that $g_n\neq0$, a fact that also holds trivially when $g_n=0$, since $f_n(t)$ is constant.  In particular, $\dot{f}_n(0)=-g_n$ for all $n=1,\dotsc,N$.  The expression for $\dot{\varphi}(t)$ given in Lemma~2 of \cite{CasazzaF:09} then gives
\begin{equation}
\label{equation.proof of Taylor's theorem 3}
\dot{\varphi}(0)
=4\real\Tr\bigparen{\dot{F}^*(0)F(0)F^*(0)F(0)}
=4\real\Tr\bigparen{-G^*FF^*F}
=-4\real\sum_{n=1}^{N}\ip{G^*FF^*Fe_n}{e_n}
=-4\real\sum_{n=1}^{N}\ip{FF^*f_n}{g_n},
\end{equation}
where $\set{e_n}_{n=1}^N$ is the standard basis of $\bbH_N$.  Next, as $\ddot{f}_n(t)=-\norm{g_n}^2f_n(t)$ for any $n$, we further have
\begin{equation}
\label{equation.proof of Taylor's theorem 4}
\Tr(\ddot{F}^*(t)F(t)F^*(t)F(t))
=\sum_{n=1}^{N}\ip{\ddot{F}^*(t)F(t)F^*(t)F(t)e_n}{e_n}
=\sum_{n=1}^{N}\ip{F^*(t)f_n(t)}{F^*(t)\ddot{f_n}(t)}
=-\sum_{n=1}^{N}\norm{g_n}^2\norm{F^*(t)f_n(t)}^2.
\end{equation}
Substituting~\eqref{equation.proof of Taylor's theorem 4} into the expression for $\ddot{\varphi}(t)$ given in Lemma~2 of \cite{CasazzaF:09} yields
\begin{equation}
\label{equation.proof of Taylor's theorem 5}
\ddot{\varphi}(t)=-4\sum_{n=1}^{N}\norm{g_n}^2\norm{F^*(t)f_n(t)}^2+4\norm{\dot{F}^*(t)F(t)}_\HS^2+2\norm{\dot{F}(t)F^*(t)+F(t)\dot{F}^*(t)}_\HS^2.
\end{equation}
To bound~\eqref{equation.proof of Taylor's theorem 5}, note that $\norm{F(t)}_\HS^2=\sum_{n=1}^{N}\norm{f_n(t)}^2=N$ and $\norm{\dot{F}(t)}_\HS^2=\sum_{n=1}^{N}\norm{\dot{f}_n(t)}^2=\sum_{n=1}^{N}\norm{g_n}^2$, and thus
\begin{align}
\nonumber
\abs{\ddot{\varphi}(t)}
&\leq4\sum_{n=1}^{N}\norm{g_n}^2\norm{F^*(t)f_n(t)}^2+4\norm{\dot{F}^*(t)F(t)}_\HS^2+2\norm{\dot{F}(t)F^*(t)+F(t)\dot{F}^*(t)}_\HS^2\\
\nonumber
&\leq4\sum_{n=1}^{N}\norm{g_n}^2\norm{F(t)}_2^2\norm{f_n(t)}^2+4\norm{\dot{F}^*(t)F(t)}_\HS^2+2\Bigparen{\norm{\dot{F}(t)F^*(t)}_\HS+\norm{F(t)\dot{F}^*(t)}_\HS}^2\\
\nonumber
&\leq4\sum_{n=1}^{N}\norm{g_n}^2\norm{F(t)}_\HS^2+12\norm{\dot{F}(t)}_\HS^2\norm{F(t)}_\HS^2\\
\label{equation.proof of Taylor's theorem 7}
&=16N\sum_{n=1}^{N}\norm{g_n}^2.
\end{align}
Substituting~\eqref{equation.proof of Taylor's theorem 3} and~\eqref{equation.proof of Taylor's theorem 7} into~\eqref{equation.proof of Taylor's theorem 2} yields~\eqref{equation.Taylor's theorem 2}.
\end{proof}

Considering the Taylor expansion of $\FP(F(t))$ given in~\eqref{equation.Taylor's theorem 1}, one might expect the \textit{gradient} of $\FP$ over $\mathbb{S}_M^N$, namely the choice of vectors $\set{g_n}_{n=1}^{N}$, modulo positive scalar multiples, which maximizes the linear term $\smash{\real\sum_{n=1}^{N}\ip{FF^*f_n}{g_n}}$,  to be given by $g_n=FF^*f_n$ for all $n=1,\dotsc,N$. Indeed, one may show that this would be the correct gradient if we regarded the frame potential as a functional over the entire space $\bbH_M^N$. However, since we are optimizing over $\mathbb{S}_M^N$, we require that $\set{g_n}_{n=1}^{N}\in\oplus_{n=1}^N f_n^\perp$.  Therefore, we instead take $\set{g_n}_{n=1}^N$ to be the projection of $\set{FF^* f_n}_{n=1}^{N}$ onto $\oplus_{n=1}^N f_n^\perp$.  In the next result, we formally verify that such a choice is optimal.

\begin{thm}
\label{theorem.gradient of the frame potential}
Pick $F=\set{f_n}_{n=1}^{N}\in\mathbb{S}_M^N$,
and let $P_n$ denote the orthogonal projection from $\mathbb{H}_M$ onto the orthogonal complement of $f_n$.
Then, the minimizer of the bound in~\eqref{equation.Taylor's theorem 2} over all $t\in\mathbb{R}$ and $\set{g_n}_{n=1}^{N}\in\oplus_{n=1}^N f_n^\perp$ is given by $t=\tfrac{1}{4N}$ and
\begin{equation}
\label{equation.optimal search directions}
g_n=P_nFF^*f_n=FF^*f_n-\ip{FF^*f_n}{f_n}f_n,\quad n=1,\dotsc,N.
\end{equation}
Moreover, for any $t\in\mathbb{R}$, this choice for $\set{g_n}_{n=1}^{N}$ gives
\begin{align}
\label{equation.gradient descent 1}
\norm{F(t)-F}_\HS^2
&\leq t^2 \sum_{n=1}^{N}\norm{P_nFF^*f_n}^2,\\
\label{equation.gradient descent 2}
\FP(F(t))
&\leq \FP(F)-4t\bigparen{1-2Nt}\sum_{n=1}^{N}\norm{P_nFF^*f_n}^2.
\end{align}
\end{thm}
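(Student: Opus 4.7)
The plan is to minimize the quadratic-in-$t$ upper bound on $\FP(F(t))$ given by~\eqref{equation.Taylor's theorem 2} jointly over $t\in\bbR$ and $\set{g_n}_{n=1}^{N}\in\oplus_{n=1}^N f_n^\perp$. The first step is to exploit the constraint $g_n\perp f_n$: writing $v_n:=P_nFF^*f_n=FF^*f_n-\ip{FF^*f_n}{f_n}f_n$, one has $\ip{FF^*f_n}{g_n}=\ip{v_n}{g_n}$, so the bound depends on $\set{g_n}$ only through the scalars $A:=\real\sum_n\ip{v_n}{g_n}$ and $E:=\sum_n\norm{g_n}^2$, taking the form $\FP(F)-4tA+8Nt^2E$.

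For fixed $\set{g_n}$ with $E>0$, completing the square in $t$ identifies the optimal step size $t=A/(4NE)$ and minimum value $\FP(F)-A^2/(2NE)$. It remains to maximize $A^2/E$ over admissible $\set{g_n}$. Viewing $(v_n)_{n=1}^{N}$ and $(g_n)_{n=1}^{N}$ as vectors in the direct sum $\oplus_{n=1}^N\bbH_M$ and applying Cauchy--Schwarz gives $A\le\sqrt{DE}$, where $D:=\sum_n\norm{v_n}^2$, hence $A^2/E\le D$ with equality precisely when $g_n=\lambda v_n$ for a common $\lambda>0$. Thus the joint minimum of the bound equals $\FP(F)-D/(2N)$ and is attained along the one-parameter family $(t,g_n)=(1/(4N\lambda),\lambda v_n)$, of which the stated pair $(t,g_n)=(1/(4N),v_n)$ is the canonical $\lambda=1$ representative.

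Next, inequalities~\eqref{equation.gradient descent 1} and~\eqref{equation.gradient descent 2} follow by direct substitution of $g_n=v_n$ into~\eqref{equation.Taylor's theorem 1} and~\eqref{equation.Taylor's theorem 2} of Proposition~\ref{proposition.Taylor's theorem}. The first is an immediate restatement. For the second, $\ip{FF^*f_n}{v_n}=\ip{v_n}{v_n}=\norm{v_n}^2$ (since $v_n\in f_n^\perp$ and $P_n$ is a projection), which collapses the linear and quadratic terms into $\FP(F)-4t\sum_n\norm{v_n}^2+8Nt^2\sum_n\norm{v_n}^2=\FP(F)-4t(1-2Nt)\sum_n\norm{v_n}^2$.

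The only real subtlety, and the only step that is not wholly mechanical, is recognizing that the joint minimizer is not unique: the rescaling $(t,g_n)\mapsto(t/\lambda,\lambda g_n)$ leaves both $tA$ and $t^2E$ invariant, so the theorem is singling out a canonical representative in which $g_n$ is exactly the constrained gradient and $t$ is the resulting optimal step. No separate treatment of $t\le0$ is needed thanks to the symmetry $(t,g_n)\mapsto(-t,-g_n)$, and the degenerate case $v_n\equiv0$ is trivial since then the bound reduces to $\FP(F)$ for every $t$, matching the stated minimizer.
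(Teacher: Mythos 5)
Your argument is correct and reaches exactly the paper's conclusion, but the route is genuinely different. The paper collapses the entire joint optimization into a single per-coordinate completion of squares: after inserting $P_n$ via $g_n=P_ng_n$, the objective $-4t\real\sum_n\ip{FF^*f_n}{g_n}+8Nt^2\sum_n\norm{g_n}^2$ is rewritten as $\tfrac2N\sum_n\real\ip{-P_nFF^*f_n+2Ntg_n}{2Ntg_n}=\tfrac1{2N}\sum_n\bigparen{\norm{-P_nFF^*f_n+4Ntg_n}^2-\norm{P_nFF^*f_n}^2}$, which is minimized precisely when $4Ntg_n=P_nFF^*f_n$. This treats the product $tg_n$ as the effective unknown and disposes of $t$ and all the $g_n$'s simultaneously, making the scaling degeneracy you point out immediate. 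You instead reduce the $g_n$-dependence to the two scalars $A$ and $E$, complete the square in $t$ for fixed $(A,E)$, and then bound $A^2/E$ by Cauchy--Schwarz on the direct-sum space. Both are valid; the paper's version is more compressed, while yours is more modular and has the minor advantage of making the joint minimum value $\FP(F)-\tfrac1{2N}\sum_n\norm{P_nFF^*f_n}^2$ explicit. Your remarks on the rescaling invariance, the sign symmetry, the degenerate case $v_n\equiv0$, and the computation $\ip{FF^*f_n}{v_n}=\norm{v_n}^2$ feeding into~\eqref{equation.gradient descent 2} all match what the paper does or would need to say, and the substitution yielding~\eqref{equation.gradient descent 1} is identical.
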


\begin{proof}
We seek to minimize
\begin{equation}
\label{equation.proof of gradient descent 1}
-4t\real\sum_{n=1}^{N}\ip{FF^*f_n}{g_n}+8Nt^2\sum_{n=1}^{N}\norm{g_n}^2=\tfrac2N\sum_{n=1}^{N}\real\ip{-FF^*f_n+2Ntg_n}{2Ntg_n}
\end{equation}
over all $\set{g_n}_{n=1}^{N}\in\mathbb{S}_M^N$ and all $t\in\mathbb{R}$. We note immediately from~\eqref{equation.proof of gradient descent 1} that the optimal $\set{g_n}_{n=1}^{N}$ and $t$ are not unique, though we now show that their product is.  Indeed, we have $P_ng_n=g_n$, and therefore
\begin{align*}
\real\ip{-FF^*f_n+2Ntg_n}{2Ntg_n}
&=\real\ip{-FF^*f_n+2Ntg_n}{2NtP_ng_n}\\
&=\real\ip{-P_nFF^*f_n+2Ntg_n}{2Ntg_n}\\
&=\tfrac14\bigparen{\norm{-P_nFF^*f_n+4Ntg_n}^2-\norm{-P_nFF^*f_n}^2}\\
&\geq-\tfrac14\norm{P_nFF^*f_n}^2,
\end{align*}
with equality if and only if $-P_nFF^*f_n+4Ntg_n=0$. Thus, to minimize~\eqref{equation.proof of gradient descent 1}, and consequently to minimize the upper bound in~\eqref{equation.Taylor's theorem 2}, we may take $t=\tfrac{1}{4N}$ and $g_n=P_nFF^*f_n$, as claimed.
Moreover, substituting these choices of $g_n$'s into~\eqref{equation.Taylor's theorem 1} and~\eqref{equation.Taylor's theorem 2} yields~\eqref{equation.gradient descent 1} and~\eqref{equation.Taylor's theorem 2}, respectively.
\end{proof}

Note that for any $t\in(0,\tfrac{1}{2N})$, Theorem~\ref{theorem.gradient of the frame potential} prescribes a direction and step size to travel from a given $F\in\mathbb{S}_M^N$ which guarantees a predictable decrease in frame potential.  Throughout the remainder of this paper, we fix any such $t$ and repeatedly apply Theorem~\ref{theorem.gradient of the frame potential} to produce a sequence of iterations which, in many cases, is guaranteed to converge to a UNTF.  One may also consider what happens to this sequence of iterations as $t$ is taken ever smaller; as $t\rightarrow0$, we expect to approach a solution to the system of nonlinear ordinary differential equations:
\begin{equation*}
\smash{\dot{f}_n(s)=-\,\Bigparen{\,F(s)F^*(s)f_n(s)-\Bigip{F(s)F^*(s)f_n(s)}{f_n(s)}f_n(s)f_n(s)\,},\quad \forall n=1,\dotsc,N,}
\end{equation*}  
a matter we leave for future research.

\subsection{The preservation of group structure}

Many popular examples of unit norm frames, such as oversampled filter banks and Gabor frames, have a group structure.  In particular, such frames are the \textit{orbit} $\set{U_i f_j}_{i\in\cI, j\in\cJ}$ of a collection of unit vectors $\set{f_j}_{j\in\cJ}$ under the action of a collection of unitary operators $\set{U_i}_{i\in\cI}$.  While such frames inherently consist of unit norm vectors, it can be difficult to ensure their tightness~\cite{ChebiraFM:10,FickusJKO:05}.  As such, it would be valuable to have a technique which increases the tightness of such frames without sacrificing their group structure.  The next result shows that the technique of Theorem \ref{theorem.gradient of the frame potential} does precisely this, provided the unitary operators are known to commute with the frame operator.

\begin{prop}
\label{proposition.group invariance}
Let the orbit $F=\set{f_{i,j}}_{i\in\cI, j\in\cJ}=\set{U_i f_j}_{i\in\cI, j\in\cJ}$ of unit vectors have the property that every unitary matrix $U_i$ commutes with its frame operator $FF^*$.  Then, pushing these vectors along the tangent directions $\set{g_{i,j}}_{i\in\cI, j\in\cJ}$ given in~\eqref{equation.optimal search directions} produces new collections of vectors which possess this same group structure: $F(t)=\set{U_i f_j(t)}_{i\in\cI, j\in\cJ}$.
\end{prop}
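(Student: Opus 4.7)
The plan is to verify directly from the update formula given in Proposition~\ref{proposition.Taylor's theorem} that each updated frame element $f_{i,j}(t)$ factors as $U_i$ applied to an updated ``generator'' $f_j(t)$, where $f_j(t)$ is obtained by running the same gradient-descent recipe on $f_j$ using the natural tangent vector $g_j := FF^*f_j - \ip{FF^*f_j}{f_j}f_j$. So first I would explicitly define $f_j(t)$ via
\[
f_j(t) := \cos(\norm{g_j}t)f_j - \sin(\norm{g_j}t)\tfrac{g_j}{\norm{g_j}}
\]
when $g_j \neq 0$, and $f_j(t) := f_j$ otherwise, and then show $f_{i,j}(t) = U_i f_j(t)$.

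The heart of the argument is the identity $g_{i,j} = U_i g_j$, which I would derive in two steps. First, because $U_i$ commutes with $FF^*$, we have $FF^*f_{i,j} = FF^*(U_i f_j) = U_i(FF^*f_j)$. Second, since $U_i$ is unitary, it preserves inner products, giving $\ip{FF^*f_{i,j}}{f_{i,j}} = \ip{U_i FF^*f_j}{U_i f_j} = \ip{FF^*f_j}{f_j}$. Combining these with the explicit formula~\eqref{equation.optimal search directions} yields
\[
g_{i,j} = FF^*f_{i,j} - \ip{FF^*f_{i,j}}{f_{i,j}}f_{i,j} = U_i FF^*f_j - \ip{FF^*f_j}{f_j} U_i f_j = U_i g_j.
\]

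Once this identity is in hand, the rest is mechanical. Unitarity gives $\norm{g_{i,j}} = \norm{U_i g_j} = \norm{g_j}$, so the scalar coefficients $\cos(\norm{g_{i,j}}t)$ and $\sin(\norm{g_{i,j}}t)$ appearing in the update formula for $f_{i,j}(t)$ agree with those appearing in the definition of $f_j(t)$. Substituting $f_{i,j} = U_i f_j$ and $g_{i,j} = U_i g_j$ into the expression for $f_{i,j}(t)$ from Proposition~\ref{proposition.Taylor's theorem} and factoring $U_i$ out by linearity then yields $f_{i,j}(t) = U_i f_j(t)$, as desired. The degenerate case $g_j = 0$ (equivalently $g_{i,j} = 0$ for all $i$) is handled identically since then $f_{i,j}(t) = f_{i,j} = U_i f_j = U_i f_j(t)$.

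There is no real obstacle here beyond careful bookkeeping: the key conceptual point is that the two ingredients in the projected gradient---the action of $FF^*$ and the projection onto $f_{i,j}^\perp$---are both intertwined by $U_i$ precisely because $U_i$ commutes with $FF^*$ and is unitary, respectively. Since the sine/cosine update is built from these ingredients plus the scalar $\norm{g_{i,j}}$ (which is $U_i$-invariant), the entire step commutes with $U_i$.
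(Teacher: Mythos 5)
Your proposal is correct and follows essentially the same route as the paper: establish $g_{i,j} = U_i g_j$ from the commutation hypothesis and unitarity, then observe that $\norm{g_{i,j}} = \norm{g_j}$ makes the cosine/sine coefficients match, so $U_i$ factors out of the update formula by linearity. The only (minor) addition you make is explicitly handling the degenerate case $g_j = 0$, which the paper leaves implicit.
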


\begin{proof}
We have $f_{i,j}(t)=\cos(\norm{g_{i,j}}t)f_{i,j}-\sin(\norm{g_{i,j}}t)\frac{g_{i,j}}{\norm{g_{i,j}}}$ where $g_{i,j}:=P_{i,j}FF^* f_{i,j}$.  That is,
\begin{equation*}
g_{i,j}
=FF^*U_i f_j-\Bigip{FF^*U_i f_j}{U_i f_j}U_i f_j
=U_iFF^*f_j-\Bigip{U_iFF^*f_j}{U_i f_j}U_i f_j
=U_i\Bigparen{FF^*f_j-\Bigip{FF^*f_j}{f_j}f_j}
=U_i g_j,
\end{equation*}
where $g_j:=FF^*f_j-\ip{FF^*f_j}{f_j}f_j$.  We thus have that $f_{i,j}(t)=U_j f_i(t)$, as claimed:
\begin{equation*}
f_{i,j}(t)
=\cos(\norm{U_i g_j}t)U_i f_j-\sin(\norm{U_i g_j}t)\tfrac{U_i g_j}{\norm{U_i g_j}}
=U_i\Bigparen{\cos(\norm{g_j}t)f_j-\sin(\norm{g_j}t)\tfrac{g_j}{\norm{g_j}}}
=U_i f_j(t).\qedhere
\end{equation*}
\end{proof}

For example, consider the space of discrete $M$-periodic signals $\ell(\bbZ_M)=\set{f:\bbZ\rightarrow\bbC: f(m+M)=f(m),\ \forall m}$.  Letting $M=AC$, the \textit{synthesis filter bank} associated with some unit norm vectors $\set{f_j}_{j\in\cJ}$ is $\set{\smash{\rmT^{Ai}f_j}}_{i=0, j\in\cJ}^{C-1}$, where $\rmT$ is the \textit{translation} operator $(\rmT f)(m):=f(m-1)$.  As one may verify that $\smash{FF^*\rmT^{Ai}=\rmT^{Ai}FF^*}$, Proposition~\ref{proposition.group invariance} guarantees that evolving the $f_j$'s according to Theorem~\ref{theorem.gradient of the frame potential} preserves this filter bank structure.  Letting $M=BD$, one can further consider the Gabor subclass of filter bank frames: the \textit{Gabor system} associated with some unit norm $f$ is $\smash{\set{\rmT^{Ai}\rmE^{Bj}f}_{i=0,\ j=0}^{C-1,D-1}}$, where $\rmE$ is the \textit{modulation} operator $(\rmE f)(m)=\rme^{\frac{2\pi\rmi m}M}f(m)$.  Though the operators $\rmE$ and $\rmT$ do not commute, we nevertheless have that $\rmE\rmT=\rme^{\frac{2\pi\rmi}M}\rmT\rmE$, a fact which suffices to guarantee that $FF^*\rmT^{Ai}\rmE^{Bj}=\rmT^{Ai}\rmE^{Bj}FF^*$, and so Proposition~\ref{proposition.group invariance} guarantees that the method of Theorem~\ref{theorem.gradient of the frame potential} preserves the Gabor structure.  In particular, one need only evolve $f$ itself, rather than the entirety of its modulates and translates.  That is, one need only compute
\begin{equation*}
FF^*f=\sum_{i=0}^{C-1}\sum_{j=0}^{D-1}\ip{f}{\rmT^{ai}\rmE^{bj}f}\rmT^{ai}\rmE^{bj}f
\end{equation*}
and consider $f(t)=\cos(\norm{g}t)f-\sin(\norm{g}t)\frac{g}{\norm{g}}$, where $g=FF^*f-\ip{FF^*f}{f}f$ and $t\in(0,\tfrac{1}{2N})$.  By iteratively applying this procedure, one produces Gabor frames of ever-increasing tightness.


\section{Sufficient conditions for linear convergence of gradient descent}

We now take a given unit norm sequence $F_0:=F=\set{f_n}_{n=1}^{N}$, and iteratively apply the main result of the previous section---Theorem~\ref{theorem.gradient of the frame potential}---to produce a sequence $\smash{\set{F_k}_{k=0}^{\infty}}$ of unit norm sequences of increasing tightness.  To be clear, fixing any $t\in(0,\tfrac{1}{2N})$, and given any unit norm sequence $F_k=\set{f_n^{(k)}}_{n=1}^{N}$, we first compute $G_k=\set{g_n^{(k)}}_{n=1}^{N}$:
\begin{equation}
\label{equation.definition of gradient descent 1}
g_n^{(k)}=P^{(k)}_n F_kF_kf_n^{(k)}=F_kF_kf_n^{(k)}-\ip{F_kF_kf_n^{(k)}}{f_n^{(k)}}f_n^{(k)},
\quad \forall n=1,\dotsc,N.
\end{equation}
We then define $F_k=\set{f_n^{(k+1)}}_{n=1}^{N}$ as follows: 
\begin{equation}
f\label{equation.definition of gradient descent 2}
_n^{(k+1)}:=\left\{\begin{array}{cl}\cos(\norm{g_n^{(k)}}t)f_n^{(k)}-\sin(\norm{g_n^{(k)}}t)\frac{g_n^{(k)}}{\norm{g_n^{(k)}}},& g_n^{(k)}\neq 0,\\f_n^{(k)},&g_n^{(k)} =0.\end{array}\right.
\end{equation}
While Theorem~\ref{theorem.gradient of the frame potential} guarantees that the values of $\norm{F_kF_k^*-\frac NM\rmI}_\HS$ are decreasing, it does not guarantee that this decrease is strict, nor that it decreases to zero in the limit, nor that the $F_k$'s themselves converge.  Indeed, gradient descent of the frame potential does not necessarily converge to a UNTF: despite the fact that every local minimizer of the frame potential is also a global minimizer, there do exist suboptimal \textit{critical frames} $F$ at which the gradient $G$ vanishes~\cite{BenedettoF:03}.  In this section, we provide conditions which suffice to avoid such nonoptimal critical frames, and moreover, guarantee that the iterative application of \eqref{equation.definition of gradient descent 1} and \eqref{equation.definition of gradient descent 2} produces a sequence of unit norm frames which indeed converges to a UNTF $F_\infty=\lim_k F_k$ that is close to $F=F_0$.  To do this, note that a unit norm sequence $F$ is critical with respect to the frame potential if and only if its gradient $G$ vanishes, which occurs precisely when each $f_n$ is an eigenvector of the frame operator $FF^*$.  As noted in~\cite{BenedettoF:03}, this occurs precisely when $F$ can be partitioned into a collection of subsequences, each of which is a unit norm tight frame for its span.  Here, the key is to recognize that in this setting, such orthogonality is actually one's enemy.  To be precise, we make the following definition:

\begin{defn}
A sequence $\set{f_n}_{n=1}^N\in\mathbb{S}_M^N$ is termed \emph{orthogonally partitionable (OP)} if there exists a nontrivial partition $\cI\sqcup\cJ=\set{1,\ldots,N}$ such that $\abs{\ip{f_i}{f_j}}=0$ for every $i\in\cI$, $j\in\cJ$.  More generally, it is $\varepsilon$-\emph{orthogonally partitionable ($\varepsilon$-OP)} if there exists a nontrivial partition $\cI\sqcup\cJ=\set{1,\ldots,N}$ such that $\abs{\ip{f_i}{f_j}}<\varepsilon$ for every $i\in\cI$, $j\in\cJ$.
\end{defn}

Thus, one way to ensure $G\neq0$ is to have that $F$ is not OP.  Indeed, as we show in the following result, if $F$ is not $\varepsilon$-OP, then the amount $F$'s frame potential decreases in one iteration of gradient descent, as given in Theorem~\ref{theorem.gradient of the frame potential}, is at least some fixed percentage of $F$'s distance from tightness.

\begin{thm}
\label{theorem.delta}
Let $\varepsilon\in(0,\tfrac1]$, and take $F\in\mathbb{S}_M^N$ satisfying $\norm{FF^*-\tfrac{N}{M}\rmI}_\HS\leq\tfrac{N}{2M}$.  Let $P_n$ denote the orthogonal projection from $\mathbb{H}_M$ onto the orthogonal complement of $f_n$.  If $F$ is not $\varepsilon$-orthogonally partitionable, then
\begin{equation}
\label{equation.alpha and beta}
\tfrac{\varepsilon^2}{4M^4}\bignorm{FF^*-\tfrac{N}{M}\rmI}_\HS^2
\leq \sum_{n=1}^N\norm{P_nFF^*f_n}^2
\leq 4N\bignorm{FF^*-\tfrac{N}{M}\rmI}_\HS^2.
\end{equation}
\end{thm}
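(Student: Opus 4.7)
Set $T := FF^* - \tfrac{N}{M}\rmI$, a self-adjoint operator with $\Tr T = 0$ whose $\|T\|_\HS^2$ is the squared distance from tightness appearing on both ends of~\eqref{equation.alpha and beta}. Since $P_nf_n=0$, we have $P_nFF^*f_n = P_nTf_n$, so the task reduces to bounding
\[
\eta := \sum_{n=1}^N\|P_nTf_n\|^2 = \sum_n\|Tf_n\|^2 - \sum_n\langle Tf_n,f_n\rangle^2
\]
above and below in terms of $\|T\|_\HS^2$.

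The upper bound is quick: drop the nonnegative subtracted sum and apply the trace identity $\sum_n\langle Af_n,f_n\rangle = \Tr(AFF^*)$ to get $\eta\leq\Tr(T^2FF^*) = \Tr(T^3) + \tfrac{N}{M}\|T\|_\HS^2$. Combining $|\Tr(T^3)|\leq\|T\|_{\mathrm{op}}\|T\|_\HS^2\leq\|T\|_\HS^3$ with the hypothesis $\|T\|_\HS\leq\tfrac{N}{2M}$ yields $\eta\leq\tfrac{3N}{2M}\|T\|_\HS^2\leq 4N\|T\|_\HS^2$.

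The lower bound is the real content. Diagonalize $T=\sum_m\mu_m u_mu_m^*$ and expand $f_n=\sum_mc_{n,m}u_m$ with $\sum_m|c_{n,m}|^2=1$; then $\|P_nTf_n\|^2$ is exactly the variance of the discrete distribution placing mass $|c_{n,m}|^2$ on $\mu_m$, and a symmetrization yields the clean identity
\[
\eta = \tfrac12\sum_{m,m'}(\mu_m-\mu_{m'})^2\sum_n|c_{n,m}|^2|c_{n,m'}|^2.
\]
Choose a spectral threshold $\tau$ splitting $\{1,\ldots,M\}$ into nonempty $S_+,S_-$ with gap $\gamma := \min_{m\in S_+}\mu_m - \max_{m\in S_-}\mu_m$, and set $a_n := \sum_{m\in S_+}|c_{n,m}|^2$; the identity immediately gives $\eta\geq\gamma^2\sum_na_n(1-a_n)$. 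Next, form the frame bipartition $\cI:=\{n : a_n\geq\tfrac12\}$, $\cJ:=\{n : a_n<\tfrac12\}$. Orthogonality of $V_+$ and $V_-$ yields $|\langle f_i,f_j\rangle|^2\leq 2a_ia_j+2(1-a_i)(1-a_j)\leq 4[a_i(1-a_i)+a_j(1-a_j)]$ for $i\in\cI,j\in\cJ$, using $a_j\leq 2a_j(1-a_j)$ when $a_j<\tfrac12$ and $1-a_i\leq 2a_i(1-a_i)$ when $a_i\geq\tfrac12$. Provided $\cI,\cJ$ are both nonempty, the failure of $\varepsilon$-OP produces a cross pair with $|\langle f_i,f_j\rangle|\geq\varepsilon$, giving $\varepsilon^2\leq 4\sum_na_n(1-a_n)\leq 4\eta/\gamma^2$, so $\eta\geq\varepsilon^2\gamma^2/4$. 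Finally, a pigeonhole over the at most $M-1$ consecutive spectral gaps of $T$, together with the elementary bound $\max_m\mu_m-\min_m\mu_m\geq\|T\|_\HS/\sqrt{M}$, secures some $\tau$ with $\gamma^2\gtrsim\|T\|_\HS^2/M^{3}$, producing the claimed lower bound up to absolute constants.

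The main obstacle is guaranteeing the frame bipartition is nontrivial for some admissible $\tau$. The hypothesis $\|T\|_\HS\leq N/(2M)$ pins every eigenvalue of $FF^*$ to $[N/(2M),3N/(2M)]$, so picking $\tau$ so that $\sum_{m\in S_+}\lambda_m\approx N/2$ should, by a counting argument, force both sides of the frame partition to be nonempty; the extra factor of $M$ separating the clean bound $\tfrac{1}{4M^3}$ one gets from a single gap from the paper's $\tfrac{1}{4M^4}$ likely reflects the slack absorbed in rigorously handling this nontriviality step across all spectral choices.
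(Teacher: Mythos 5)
Your approach is a genuine reworking of the paper's argument: the symmetrized variance identity
$\sum_n\|P_nTf_n\|^2=\tfrac12\sum_{m,m'}(\mu_m-\mu_{m'})^2\sum_n|c_{n,m}|^2|c_{n,m'}|^2$
is cleaner than the paper's isolation of $(\lambda_m-\gamma_n)^2|\ip{f_n}{e_m}|^2$ terms, and your Cauchy--Schwarz split of $\ip{f_i}{f_j}$ into $S_+$ and $S_-$ components gives
$\varepsilon^2\leq 4\sum_n a_n(1-a_n)$ without the extra factor of $M$ that the paper incurs in~\eqref{equation.partition gammas 5}.
Carried to completion, your route would in fact improve the left-hand constant in~\eqref{equation.alpha and beta} from $\tfrac{\varepsilon^2}{4M^4}$ to $\tfrac{\varepsilon^2}{4M^3}$.
(Your upper bound also works and is slightly sharper, $\tfrac{3N}{2M}$ versus $4N$, though the paper's upper bound needs no hypothesis on $\|FF^*-\tfrac NM\rmI\|_\HS$ while yours does; this is immaterial for the statement.)

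However, there is a real gap, and you yourself flag it: the lower-bound chain $\eta\geq\gamma^2\sum_na_n(1-a_n)\geq\tfrac{\gamma^2\varepsilon^2}{4}$ requires \emph{both} $\cI$ and $\cJ$ to be nonempty, and you have not established this.  Your proposed remedy --- choose $\tau$ so that $\sum_{m\in S_+}\lambda_m\approx N/2$ --- cannot work as stated, because it collides head-on with the other constraint you placed on $\tau$: you also need $\tau$ to sit inside a spectral gap of size $\gamma\gtrsim\|T\|_\HS/M^{3/2}$, and a threshold chosen to balance the eigenvalue mass need not lie in any large gap at all.  You cannot in general satisfy both objectives simultaneously, so a single ``counting argument'' over thresholds does not close the hole.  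Moreover, balancing $\sum_n a_n=N/2$ still does not force $\cI,\cJ$ nontrivial: if every $a_n=\tfrac12$, then $\cJ=\emptyset$ under your definition.  The paper handles this degeneracy with a separate case analysis (isolating the $e_{m_0}$ or $e_{m_0+1}$ term and using $\lambda_{m_0}\geq\tfrac{N}{2M}$, which is where the hypothesis $\norm{FF^*-\tfrac NM\rmI}_\HS\leq\tfrac N{2M}$ really enters).

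The good news is that your own framework admits a fix that is even shorter than the paper's.  Keep $\tau$ as the max-gap threshold.  If, say, $\cJ=\emptyset$, i.e.\ $a_n\geq\tfrac12$ for all $n$, then $a_n(1-a_n)\geq\tfrac12(1-a_n)$, and summing over $n$ and interchanging order of summation gives
$\sum_n a_n(1-a_n)\geq\tfrac12\sum_n(1-a_n)=\tfrac12\sum_{m\in S_-}\norm{F^*e_m}^2=\tfrac12\sum_{m\in S_-}\lambda_m\geq\tfrac12\cdot\tfrac{N}{2M}\geq\tfrac14\geq\tfrac{\varepsilon^2}{4}$,
the penultimate inequality using $S_-\neq\emptyset$ and $\lambda_m\geq\tfrac N{2M}$ (which follows from the hypothesis), and the last using $N\geq M$ and $\varepsilon\leq1$.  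The same bound holds symmetrically when $\cI=\emptyset$.  So $\sum_n a_n(1-a_n)\geq\tfrac{\varepsilon^2}{4}$ in every case, and the non-$\varepsilon$-OP hypothesis is only invoked when both halves are populated.  With this one observation inserted, your proof is complete, correct, and yields the stated bound with room to spare.
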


\begin{proof}
Let $\set{\lambda_m}_{m=1}^M$ denote the eigenvalues of $FF^*$, arranged in increasing order, with corresponding orthonormal eigenbasis $\set{e_m}_{m=1}^M$.  Decomposing any $f_n$ in terms of this eigenbasis gives
\begin{equation*}
\gamma_n:=\ip{FF^*f_n}{f_n}=\biggip{FF^*\sum_{m=1}^M\ip{f_n}{e_m}e_m}{f_n}=\sum_{m=1}^M\lambda_m\abs{\ip{f_n}{e_m}}^2.
\end{equation*}
That is, each $\gamma_n$ is a convex combination of $FF^*$'s spectrum.  Since, as noted previously, $\tfrac{N}{M}$ is the average of the $\lambda_m$'s, we therefore have $\gamma_n,\tfrac{N}{M}\in[\lambda_{1},\lambda_{M}]$, and so for any $m$ and $n$,
\begin{equation}
\label{equation.projected length 1}
(\lambda_m-\gamma_n)^2\leq(\lambda_{M}-\lambda_{1})^2
\leq 4\max_{m'}(\lambda_{m'}-\tfrac{N}{M})^2
\leq 4\sum_{m'=1}^{M}(\lambda_{m'}-\tfrac{N}{M})^2
=4\bignorm{FF^*-\tfrac{N}{M}\rmI}_\HS^2.
\end{equation}
Also, by the definitions of $P_n$ and $\gamma_n$, we have $\sum_{n=1}^N\norm{P_nFF^*f_n}^2
=\sum_{n=1}^N\norm{(FF^*-\gamma_n\rmI)f_n}^2$.
Decomposing each $f_n$ in terms of the $e_m$'s therefore gives
\begin{equation}
\label{equation.projected length 2}
\sum_{n=1}^N\norm{P_nFF^*f_n}^2\!
=\sum_{n=1}^N\biggnorm{(FF^*-\gamma_n\rmI)\!\sum_{m=1}^M\ip{f_n}{e_m}e_m}^2\!
=\sum_{n=1}^N\biggnorm{\sum_{m=1}^M(\lambda_m-\gamma_n)\ip{f_n}{e_m}e_m}^2\!
=\sum_{n=1}^N\sum_{m=1}^M(\lambda_m-\gamma_n)^2\abs{\ip{f_n}{e_m}}^2.
\end{equation}
From here, we apply \eqref{equation.projected length 1} to get the right-hand inequality of~\eqref{equation.alpha and beta}:
\begin{align}
\nonumber
\label{equation.projected length 3}
\sum_{n=1}^N\norm{P_nFF^*f_n}^2
\leq 4\bignorm{FF^*-\tfrac{N}{M}\rmI}_\HS^2\sum_{n=1}^N\sum_{m=1}^M\abs{\ip{f_n}{e_m}}^2
=4N\bignorm{FF^*-\tfrac{N}{M}\rmI}_\HS^2.
\end{align}
Note that this inequality holds in general, that is, for any $F\in\bbS_M^N$.  We now seek the left-hand inequality of~\eqref{equation.alpha and beta}.  Since the largest gap between successive eigenvalues is no smaller than the average gap, there necessarily exists an $m_0$ that satisfies 
\begin{equation}
\label{equation.define m zero}
\lambda_{m_0+1}-\lambda_{m_0}
\geq\tfrac{1}{M-1}(\lambda_{M}-\lambda_{1})
\geq\tfrac{1}{M}(\lambda_{M}-\lambda_{1}).
\end{equation}
Define $\smash{\cI:=\set{n:\gamma_n<\tfrac{1}{2}(\lambda_{m_0}+\lambda_{m_0+1})}}$, $\smash{\cJ:=\set{1,\ldots,N}\setminus\cI}$.  This partitions the $\gamma_n$'s according to where they lie in relation to the midpoint $\tfrac{1}{2}(\lambda_{m_0}+\lambda_{m_0+1})$ of the largest gap between eigenvalues.  Therefore, the $\lambda_m$'s lying above this midpoint are at least half the gap away, namely at least $\tfrac{1}{2}(\lambda_{m_0+1}-\lambda_{m_0})\geq\tfrac{1}{2M}(\lambda_{M}-\lambda_{1})$ away, from the $\gamma_n$'s lying below the midpoint, and vice versa.  In fact, when $m\geq m_0+1$ and $n\in\cI$, or when $m\leq m_0$ and $n\in\cJ$, we have
\begin{equation}
\label{equation.partition gammas 1}
(\lambda_m-\gamma_n)^2
\geq\Bigbracket{\tfrac{1}{2M}(\lambda_{M}-\lambda_{1})}^2
\geq\tfrac{1}{4M^2}\max_m(\lambda_m-\tfrac{N}{M})^2
\geq\tfrac{1}{4M^3}\sum_m(\lambda_m-\tfrac{N}{M})^2
=\tfrac{1}{4M^3}\bignorm{FF^*-\tfrac{N}{M}\rmI}_\HS^2.
\end{equation}
That said, if $i\in\cI$ and $j\in\cJ$, then regardless of $m$, $\lambda_m$ is on one side of the midpoint $\tfrac{1}{2}(\lambda_{m_0}+\lambda_{m_0+1})$, and either $\gamma_i$ or $\gamma_j$ is on the other side, implying
\begin{equation}
\label{equation.partition gammas 1.5}
\max\biggset{(\lambda_m-\gamma_i)^2,(\lambda_m-\gamma_j)^2}
\geq\tfrac{1}{4M^3}\bignorm{FF^*-\tfrac{N}{M}\rmI}_\HS^2.
\end{equation}

Now suppose both $\cI$ and $\cJ$ are nonempty.  Since $F$ is not $\varepsilon$-OP, there exists $i\in\cI$ and $j\in\cJ$ such that $\varepsilon\leq\abs{\ip{f_i}{f_j}}$.  Decomposing over the eigenbasis, we therefore have
\begin{equation}
\label{equation.partition gammas 5}
\varepsilon^2
\leq\abs{\ip{f_i}{f_j}}^2
\leq\biggparen{\sum_{m=1}^M\abs{\ip{f_i}{e_m}}\abs{\ip{f_j}{e_m}}}^2\\
\leq M\sum_{m=1}^M\abs{\ip{f_i}{e_m}}^2\abs{\ip{f_j}{e_m}}^2
\leq M\sum_{m=1}^M\min\biggset{\abs{\ip{f_i}{e_m}}^2,\abs{\ip{f_j}{e_m}}^2},
\end{equation}
where the last inequality uses $\abs{\ip{f_n}{e_m}}\leq\norm{f_n}\norm{e_m}=1$.  Recalling \eqref{equation.projected length 2}, we isolate the $i$th and $j$th terms:
\begin{align*}
\sum_{n=1}^N\norm{P_nFF^*f_n}^2
&=\sum_{n=1}^N\sum_{m=1}^M(\lambda_m-\gamma_n)^2\abs{\ip{f_n}{e_m}}^2\\
&\geq\sum_{m=1}^M\biggparen{(\lambda_m-\gamma_i)^2\abs{\ip{f_i}{e_m}}^2+(\lambda_m-\gamma_j)^2\abs{\ip{f_j}{e_m}}^2}\\
&\geq\sum_{m=1}^M\max\biggset{(\lambda_m-\gamma_i)^2,(\lambda_m-\gamma_j)^2}\min\biggset{\abs{\ip{f_i}{e_m}}^2,\abs{\ip{f_j}{e_m}}^2}.
\end{align*}
From here, we apply \eqref{equation.partition gammas 1.5} and \eqref{equation.partition gammas 5} to get
\begin{equation*}
\sum_{n=1}^N\norm{P_nFF^*f_n}^2
\geq\tfrac{1}{4M^3}\bignorm{FF^*-\tfrac{N}{M}\rmI}_\HS^2\sum_{m=1}^M\min\biggset{\abs{\ip{f_i}{e_m}}^2,\abs{\ip{f_j}{e_m}}^2}
\geq\tfrac{\varepsilon^2}{4M^4}\bignorm{FF^*-\tfrac{N}{M}\rmI}_\HS^2.
\end{equation*}
Therefore, we indeed have the left-hand inequality of \eqref{equation.alpha and beta} in the case where both $\cI$ and $\cJ$ are nonempty.  We now turn to the case where either $\cI$ or $\cJ$ is empty.  We have
\begin{equation}
\label{equation.partition gammas 2}
\max_m(\lambda_m-\tfrac{N}{M})^2\leq\sum_{m=1}^M(\lambda_m-\tfrac{N}{M})^2
=\bignorm{FF^*-\tfrac{N}{M}}_\HS^2\leq\bigparen{\tfrac{N}{2M}}^2,
\end{equation}
where the last inequality follows from one of our assumptions.  Therefore, recalling $m_0$ from \eqref{equation.define m zero}, we have
\begin{equation}
\label{equation.partition gammas 3}
\sum_{n=1}^N\abs{\ip{f_n}{e_{m_0}}}^2
=\norm{F^*e_{m_0}}^2
=\ip{FF^*e_{m_0}}{e_{m_0}}
=\lambda_{m_0}
\geq\lambda_1
\geq\tfrac{N}{M}-\max_m\bigabs{\lambda_m-\tfrac{N}{M}}
\geq\tfrac{N}{2M},
\end{equation}
where the last inequality is by \eqref{equation.partition gammas 2}.  In particular, if $\cI$ is empty, we recall~\eqref{equation.projected length 2}, isolating its $m_0$th term:
\begin{equation}
\label{equation.partition gammas 3.5}
\sum_{n=1}^N\norm{P_nFF^*f_n}^2
=\sum_{n=1}^N\sum_{m=1}^M(\lambda_m-\gamma_n)^2\abs{\ip{f_n}{e_m}}^2
\geq\sum_{n=1}^N(\lambda_{m_0}-\gamma_n)^2\abs{\ip{f_n}{e_{m_0}}}^2.
\end{equation}
Since $\cI=\emptyset$, then $\cJ=\set{1,\dotsc,N}$, and thus~\eqref{equation.partition gammas 1} holds for $m=m_0$ and all $n$.  Coupled with~\eqref{equation.partition gammas 3} and~\eqref{equation.partition gammas 3.5}, this implies
\begin{equation*}
\sum_{n=1}^N\norm{P_nFF^*f_n}^2
\geq\tfrac{1}{4M^3}\bignorm{FF^*-\tfrac{N}{M}\rmI}_\HS^2\sum_{n=1}^N\abs{\ip{f_n}{e_{m_0}}}^2
\geq\tfrac{N}{8M^4}\bignorm{FF^*-\tfrac{N}{M}\rmI}_\HS^2
\geq\tfrac{\varepsilon^2}{4M^4}\bignorm{FF^*-\tfrac{N}{M}\rmI}_\HS^2,
\end{equation*}
where the last inequality uses $\varepsilon^2\leq1\leq\tfrac{N}{2}$.   This proves the left-hand inequality of~\eqref{equation.alpha and beta} in the case where $\cI$ is empty.  A similar argument---isolating the $(m_0+1)$st term in~\eqref{equation.projected length 2}---holds in the remaining case where $\cJ$ is empty.
\end{proof}

The previous result, along with Theorem~\ref{theorem.gradient of the frame potential}, guarantees a certain decrease in frame potential, provided the given frame $F$ is not $\varepsilon$-OP.  In the next result, we show that if, when performing the gradient descent steps~\eqref{equation.definition of gradient descent 1} and~\eqref{equation.definition of gradient descent 2}, one can ensure that each iteration $F_k$ is not $\varepsilon$-OP for some $\varepsilon>0$ independent of $k$, then gradient descent converges to a nearby UNTF at a linear rate.

\begin{thm}
\label{theorem.linear convergence}
Fix $\varepsilon\in(0,1]$ and $t\in(0,\tfrac{1}{2N})$, take $F_0=\set{f_n^{(0)}}_{n=1}^N\in\mathbb{S}_M^N$ satisfying $\norm{F_0F_0^*-\tfrac{N}{M}\rmI}_\HS\leq\tfrac{N}{2M}$, and iterate $F_{k+1}:=F_k(t)$ as in~\eqref{equation.definition of gradient descent 1} and~\eqref{equation.definition of gradient descent 2}.  If, for any fixed $K$, we have that $F_k$ is not $\varepsilon$-orthogonally partitionable for all $k=0,\ldots,K-1$, then the $K$th iteration $F_K$ satisfies
\begin{align}
\label{equation.linear convergence 1}
\norm{F_K-F_0}_\HS
&\leq\tfrac{4M^4N^\frac{1}{2}}{(1-2Nt)\varepsilon^2}\bignorm{F_0F_0^*-\tfrac{N}{M}\rmI}_\HS,\\
\label{equation.linear convergence 2}
\bignorm{F_KF_K^*-\tfrac{N}{M}\rmI}_\HS
&\leq\Bigparen{1-\tfrac{t(1-2Nt)\varepsilon^2}{M^4}}^{\frac K2}\bignorm{F_0F_0^*-\tfrac{N}{M}\rmI}_\HS.
\end{align}
Moreover, if $F_k$ is not $\varepsilon$-orthogonally partitionable for any $k$, then $F_\infty:=\lim_kF_k$ exists and is a unit norm tight frame within \eqref{equation.linear convergence 1} from $F_0$.
\end{thm}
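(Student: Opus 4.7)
The plan is to chain together Theorem~\ref{theorem.gradient of the frame potential} and Theorem~\ref{theorem.delta} to obtain a one-step contraction for the squared distance from tightness, then iterate and sum. Throughout, the bridge between statements about the frame potential and statements about distance from tightness is the identity $\FP(F)-\tfrac{N^2}{M}=\bignorm{FF^*-\tfrac{N}{M}\rmI}_\HS^2$ from~\eqref{equation.distance from tightness}.

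First I would verify, by induction on $k$, that the norm hypothesis $\bignorm{F_kF_k^*-\tfrac{N}{M}\rmI}_\HS\leq\tfrac{N}{2M}$ needed in Theorem~\ref{theorem.delta} persists through $k=K-1$; this is automatic once the contraction below is in hand, since the distances from tightness will be nonincreasing. Substituting the left-hand inequality of~\eqref{equation.alpha and beta} into the upper bound~\eqref{equation.gradient descent 2} and rewriting via the identity above produces
\begin{equation*}
\bignorm{F_{k+1}F_{k+1}^*-\tfrac{N}{M}\rmI}_\HS^2
\leq\Bigparen{1-\tfrac{t(1-2Nt)\varepsilon^2}{M^4}}\bignorm{F_kF_k^*-\tfrac{N}{M}\rmI}_\HS^2.
\end{equation*}
Iterating $K$ times and taking a square root yields~\eqref{equation.linear convergence 2}.

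For the bound on $\norm{F_K-F_0}_\HS$, I would apply the triangle inequality along the sequence of gradient steps. Combining~\eqref{equation.gradient descent 1} with the right-hand inequality of~\eqref{equation.alpha and beta} gives each consecutive displacement $\bignorm{F_{k+1}-F_k}_\HS\leq 2t\sqrt{N}\,\bignorm{F_kF_k^*-\tfrac{N}{M}\rmI}_\HS$. Substituting the geometric bound just derived turns this into a geometric series with common ratio $r:=\bigparen{1-\tfrac{t(1-2Nt)\varepsilon^2}{M^4}}^{1/2}<1$, and the elementary estimate $1-r\geq\tfrac{1-r^2}{2}$ collapses $\tfrac{2t\sqrt{N}}{1-r}$ to the exact constant $\tfrac{4M^4N^{1/2}}{(1-2Nt)\varepsilon^2}$ in~\eqref{equation.linear convergence 1}.

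Finally, the hypothesis in the last sentence of the theorem makes the above bounds valid for every $k$, so $\set{F_{k+1}-F_k}$ is summable in Hilbert--Schmidt norm and $\set{F_k}$ is Cauchy in the complete space $\mathbb{S}_M^N$; hence $F_\infty:=\lim_k F_k$ exists. Continuity of $F\mapsto FF^*$ together with~\eqref{equation.linear convergence 2} forces $F_\infty F_\infty^*=\tfrac{N}{M}\rmI$, making $F_\infty$ a UNTF, while sending $K\to\infty$ in~\eqref{equation.linear convergence 1} gives the stated bound on $\norm{F_\infty-F_0}_\HS$. I do not anticipate a real obstacle; the only mildly delicate point is the estimate $1-r\geq\tfrac{1-r^2}{2}$, which is what makes the geometric sum produce the precise constant in~\eqref{equation.linear convergence 1} rather than a slightly worse one.
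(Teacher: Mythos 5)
Your proposal is correct and follows essentially the same route as the paper's proof: the one-step contraction from combining Theorems~\ref{theorem.gradient of the frame potential} and~\ref{theorem.delta} via the identity~\eqref{equation.distance from tightness}, then the geometric-series bound on the cumulative displacement using $\norm{F_{k+1}-F_k}_\HS\leq 2t\sqrt{N}\,\norm{F_kF_k^*-\tfrac{N}{M}\rmI}_\HS$, with the estimate $1-r\geq\tfrac{1-r^2}{2}$ (equivalently $(1-x)^{1/2}\leq 1-\tfrac{x}{2}$, which is exactly what the paper invokes) producing the stated constant. Your explicit remark about the hypothesis $\norm{F_kF_k^*-\tfrac{N}{M}\rmI}_\HS\leq\tfrac{N}{2M}$ persisting by induction is a small clarification the paper leaves implicit, but it does not change the argument.
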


\begin{proof}
Define $\gamma:=\tfrac{\varepsilon^2}{4M^4}$, and suppose $F_k$ is not $\varepsilon$-OP for $k=0,\ldots,K-1$.  Then combining~\eqref{equation.distance from tightness}, \eqref{equation.gradient descent 2} and the lower bound in~\eqref{equation.alpha and beta} gives that $F_{k+1}:=F_k(t)$ satisfies
\begin{align*}
\bignorm{F_{k+1}F_{k+1}^*-\tfrac{N}{M}\rmI}_\HS^2&=\FP(F_k(t))-\tfrac{N^2}{M}\\
&\leq\FP(F_k)-\tfrac{N^2}{M}-4t(1-2Nt)\sum_{n=1}^N\norm{P_n^{(k)}F_kF_k^*f_n^{(k)}}^2\\
&\leq\big[1-4t(1-2Nt)\gamma\big]\bignorm{F_kF_k^*-\tfrac{N}{M}\rmI}_\HS^2.
\end{align*}
From here, one may proceed inductively to find that
\begin{equation}
\label{equation.linear convergence 3}
\bignorm{F_kF_k^*-\tfrac{N}{M}\rmI}_\HS^2\leq\big[1-4t(1-2Nt)\gamma\big]^k\bignorm{F_0F_0^*-\tfrac{N}{M}\rmI}_\HS^2,
\end{equation}
which proves~\eqref{equation.linear convergence 2}, recalling $\gamma:=\tfrac{\varepsilon^2}{4M^4}$.  Next, let $\delta:=4N$.  To prove~\eqref{equation.linear convergence 1}, we use \eqref{equation.gradient descent 1}, the upper bound in~\eqref{equation.alpha and beta}, and \eqref{equation.linear convergence 3} to obtain
\begin{equation}
\label{equation.linear convergence 4}
\norm{F_{k+1}-F_k}_\HS^2
\leq t^2\sum_{n=1}^N\norm{P_n^{(k)}F_kF_k^*f_n^{(k)}}^2\\
\leq t^2\delta\bignorm{F_kF_k^*-\tfrac{N}{M}\rmI}_\HS^2\\
\leq t^2\delta\big[1-4t(1-2Nt)\gamma\big]^k\bignorm{F_0F_0^*-\tfrac{N}{M}\rmI}_\HS^2
\end{equation}
for all $k=0,\dotsc,K-1$.  In particular, for any $K'<K$, we can bound $\norm{F_{K}-F_{K'}}_\HS$ in terms of a geometric series; since $t\in(0,\tfrac{1}{2N})$ and $\gamma=\tfrac{\varepsilon^2}{4M^4}$ with $\varepsilon\in(0,1]$, this series is guaranteed to converge:
\begin{equation}
\label{equation.linear convergence 4.25}
\norm{F_K-F_{K'}}_\HS
\leq\sum_{k=K'}^{K-1}\norm{F_{k+1}-F_k}_\HS
\leq t\delta^\frac{1}{2}\biggparen{\sum_{k=K'}^\infty\bigbracket{1-4t(1-2Nt)\gamma}^\frac{k}{2}}\bignorm{F_0F_0^*-\tfrac{N}{M}\rmI}_\HS.
\end{equation}
In particular, letting $K'=0$ in~\eqref{equation.linear convergence 4.25} yields \eqref{equation.linear convergence 1}:
\begin{equation}
\label{equation.linear convergence 4.5}
\norm{F_K-F_0}_\HS
\leq\biggparen{\tfrac{t\delta^\frac{1}{2}}{1-[1-4t(1-2Nt)\gamma]^\frac{1}{2}}}\bignorm{F_0F_0^*-\tfrac{N}{M}\rmI}_\HS
\leq\tfrac{\delta^\frac{1}{2}}{2(1-2Nt)\gamma}\bignorm{F_0F_0^*-\tfrac{N}{M}\rmI}_\HS,
\end{equation}
where we have used the fact that $(1-x)^\frac{1}{2}\leq1-\tfrac{1}{2}x$.

Now suppose $F_k$ is never $\varepsilon$-OP for any $k$, and so \eqref{equation.linear convergence 4.25} holds for all $K'<K$.  In particular, as the series in~\eqref{equation.linear convergence 4.25} vanishes (independently of $K$) as $K'$ grows large, we have that $\set{F_k}_{k=0}^{\infty}$ is a Cauchy sequence.  As $\mathbb{S}_M^N$ is complete, $F_\infty:=\lim_k F_k$ exists.  Taking the limit of~\eqref{equation.linear convergence 3} yields $\smash{\norm{F_\infty F_\infty^*-\tfrac{N}{M}\rmI}_\HS=0}$, and so $F_\infty$ is a UNTF.  Meanwhile, taking the limit of~\eqref{equation.linear convergence 4.5} yields our final conclusion, namely that $F_\infty$ also satisfies \eqref{equation.linear convergence 1}:
\begin{equation*}
\norm{F_\infty-F_0}_\HS
\leq\tfrac{\delta^\frac{1}{2}}{2(1-2Nt)\gamma}\bignorm{F_0F_0^*-\tfrac{N}{M}\rmI}_\HS
=\tfrac{4M^4N^\frac{1}{2}}{(1-2Nt)\varepsilon^2}~\bignorm{F_0F_0^*-\tfrac{N}{M}\rmI}_\HS.\qedhere
\end{equation*}
\end{proof}


\section{Solutions to the Paulsen problem}

In the previous section, we applied gradient descent to $F_0\in\bbS_M^N$ to produce a sequence of iterates $\set{F_k}_{k=0}^\infty$.  We showed that if $F_0$ is sufficiently tight and if all resulting $F_k$'s are not $\varepsilon$-OP for some fixed $\varepsilon>0$, then this sequence converges to a UNTF at a linear rate.  In this section, we show that such an $\varepsilon$ always exists, provided $M$ and $N$ are relatively prime.  Meanwhile, in the non-relatively-prime case, we give an example that shows such $\varepsilon$'s are not guaranteed to exist.  In this case, our gradient descent algorithm's rate of convergence is threatened whenever our frame becomes nearly OP; to overcome this threat, we ``jump" from our current iterate to a nearby OP frame, and then continue gradient descent on the individual subframes over their respective subspaces.  In so doing, we are able to give solutions to the Paulsen problem \eqref{equation.our version of Paulsen} even in the non-relatively-prime case.

\subsection{Case I: $M$ and $N$ are relatively prime}

Theorem~\ref{theorem.linear convergence} guarantees that gradient descent converges to a UNTF at a linear rate, provided the iterations never become $\varepsilon$-OP for all arbitrarily small $\varepsilon$'s.  When $M$ and $N$ are relatively prime, this is not a problem:

\begin{thm}
\label{theorem.not orthogonal}
Take $F\in\mathbb{S}_M^N$ with $M$ and $N$ relatively prime.  If $\norm{FF^*-\tfrac{N}{M}\rmI}_\HS^2\leq\tfrac{2}{M^3}$, then $F$ is not $\bigparen{\tfrac{1}{M^8N^4}}$-orthogonally partitionable.
\end{thm}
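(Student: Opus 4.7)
I argue by contradiction: suppose $F$ is $\varepsilon$-orthogonally partitionable with $\varepsilon = 1/(M^8 N^4)$ via a nontrivial partition $\cI \sqcup \cJ$ of $\set{1,\ldots,N}$. Writing $C = F_\cI F_\cI^*$, $D = F_\cJ F_\cJ^*$, and $H = F_\cI^* F_\cJ$, we have $FF^* = C + D$ and $\|H\|_\HS^2 = \sum_{i \in \cI, j \in \cJ}|\ip{f_i}{f_j}|^2 < \varepsilon^2 N_\cI N_\cJ \leq \varepsilon^2 N^2/4$.  The central idea is that $\varepsilon$-OP forces each of $C$ and $D$ to have eigenvalues clustered at $\set{0, N/M}$; this makes $N_\cI = \Tr C$ close to an integer multiple of $N/M$, which is impossible under relative primeness and nontriviality.

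The Gram matrix $G = F^* F$ differs from its block-diagonalization $G_{bd} = G_\cI \oplus G_\cJ$ by $\|G - G_{bd}\|_\HS^2 = 2\|H\|_\HS^2 < \varepsilon^2 N^2/2$, so by the Hoffman--Wielandt inequality the sorted eigenvalues of $G_{bd}$ (the nonzero eigenvalues $\mu_1,\ldots,\mu_r$ of $C$ and $\nu_1,\ldots,\nu_s$ of $D$, padded with zeros) approximate those of $G$ (the $\lambda_k(FF^*)$, padded with zeros) in $\ell^2$.  The tightness hypothesis places each $\lambda_k(FF^*)$ within $\sqrt 2/M^{3/2}$ of $N/M$ and in particular keeps $F$ a frame, so $r + s \geq M$, the top $M$ nonzero eigenvalues of $G_{bd}$ cluster near $N/M$, and any remaining nonzero eigenvalues are each at most $\varepsilon N/\sqrt 2$.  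Let $r_L$ and $s_L$ count these top-$M$ eigenvalues coming from $C$ and $D$ respectively, so $r_L + s_L = M$; note that $r_L, s_L \geq 1$, since otherwise (say $r_L = 0$) all of $C$'s eigenvalues would be ``small,'' forcing $N_\cI = \sum_k \mu_k \leq M\varepsilon N/\sqrt 2 < 1$ and contradicting nontriviality.  A direct computation using the identity $\|C\|_\HS^2 + \|D\|_\HS^2 = \|FF^*\|_\HS^2 - 2\Tr(CD)$ then yields
\begin{equation*}
S_\mu + S_\nu := \sum_{\text{large }\mu}\bigparen{\mu - \tfrac{N}{M}}^2 + \sum_{\text{large }\nu}\bigparen{\nu - \tfrac{N}{M}}^2 \leq \tfrac{2}{M^3} + O\bigparen{\varepsilon N^2/\sqrt{M}}.
\end{equation*}

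To conclude, decompose $N_\cI$ into its large and small parts and apply Cauchy--Schwarz: with $\delta := M\varepsilon N/\sqrt 2$ bounding the small-$\mu$ contribution, $|N_\cI - r_L N/M| \leq \sqrt{r_L S_\mu} + \delta$; symmetrically, via $N_\cJ - s_L N/M = -(N_\cI - r_L N/M)$, $|N_\cI - r_L N/M| \leq \sqrt{s_L S_\nu} + \delta$.  Setting $U := (|N_\cI - r_L N/M| - \delta)_+$ and dividing $U^2 \leq r_L S_\mu$ by $r_L$ and $U^2 \leq s_L S_\nu$ by $s_L$, then adding, the identity $\tfrac{1}{r_L} + \tfrac{1}{s_L} = \tfrac{M}{r_L s_L}$ together with $r_L s_L \leq M^2/4$ (AM--GM, since $r_L + s_L = M$) yields
\begin{equation*}
U^2 \leq \tfrac{r_L s_L}{M}\bigparen{S_\mu + S_\nu} \leq \tfrac{M}{4}\Bigparen{\tfrac{2}{M^3} + O(\varepsilon N^2/\sqrt M)} = \tfrac{1}{2M^2} + o(1/M^2).
\end{equation*}
Hence $|N_\cI - r_L N/M| \leq 1/(\sqrt 2\, M) + \delta + o(1/M) < 1/M$ for the chosen value of $\varepsilon$.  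But $\gcd(M,N) = 1$ and $1 \leq N_\cI \leq N-1$ force $MN_\cI$ to be nondivisible by $N$, so $|MN_\cI - r_L N| \geq 1$ and hence $|N_\cI - r_L N/M| \geq 1/M$---the desired contradiction.

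The main obstacle is obtaining the \emph{strict} inequality $< 1/M$: a single-sided Cauchy--Schwarz using only $r_L \leq M$ gives $\leq 1/M$ with equality possible when $r_L = s_L = M/2$ (for $M$ even), so the symmetric combination above is essential, and the AM--GM bound $r_L s_L \leq M^2/4$ is precisely what furnishes the saving factor of $\sqrt 2$.  The specific numerical values $\tfrac{2}{M^3}$ and $\tfrac{1}{M^8 N^4}$ appearing in the theorem are dictated by the need to keep $\delta$ and the $O(\varepsilon N^2/\sqrt M)$ correction decisively below $1/M$.
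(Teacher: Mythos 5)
Your proposal is correct, and while it shares the paper's central insight---that relative primeness forces $|N_\cI M - r_L N|\geq 1$, which is incompatible with the tightness of both subframes---it reaches that endpoint by a genuinely different route.  The paper argues by contrapositive, works entirely with the frame potential, introduces an explicit threshold $\lambda'=\tfrac{1}{M^4N}$ to split the eigenvalues of $F_\cI F_\cI^*$ and $F_\cJ F_\cJ^*$ into ``large'' and ``small,'' and establishes $M_\cI+M_\cJ\leq M$ by a hands-on argument (a unit vector in the intersection of the two top eigenspaces would have to have near-unit inner product with itself built from $\varepsilon$-orthogonal families, a contradiction); it then bounds $\FP(F)$ from below using Jensen and the quadratic identity $\tfrac{a^2}{p}+\tfrac{b^2}{q}=\tfrac{(a+b)^2}{p+q}+\tfrac{(aq-bp)^2}{pq(p+q)}$.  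You instead work by contradiction, block-diagonalize the Gram matrix $G=F^*F$, and invoke the Hoffman--Wielandt inequality to pair sorted eigenvalues of $G$ and $G_{bd}$ in $\ell^2$.  This gives you the sharper fact $r_L+s_L=M$ for free (where the paper only gets $M_\cI+M_\cJ\leq M$), eliminating the need for an ad hoc threshold, and you then pass to the scalar quantity $|N_\cI - r_LN/M|$ via Cauchy--Schwarz and the symmetric AM--GM bound $r_Ls_L\leq M^2/4$ rather than to the frame potential.  The two arguments are dual: your Cauchy--Schwarz step $U^2(1/r_L+1/s_L)\leq S_\mu+S_\nu$ is exactly the algebraic content of the paper's identity, just read in the opposite direction.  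Your version is arguably cleaner conceptually (Hoffman--Wielandt replaces a page of computation), at the cost of importing a named perturbation theorem the paper deliberately avoids; the paper's intersection argument is more elementary but more delicate.  To make your sketch fully rigorous you would need to replace the $O(\varepsilon N^2/\sqrt M)$ and $o(1/M^2)$ placeholders with explicit constants, but the threshold $\varepsilon=M^{-8}N^{-4}$ is generous enough (by several orders of magnitude) that this is purely a bookkeeping exercise.
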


\begin{proof}
We prove by contrapositive: take $\smash{F\in\mathbb{S}_M^N}$ with $M$ and $N$ relatively prime, and suppose $F$ is $\varepsilon$-OP with $\smash{\varepsilon:=\tfrac{1}{M^8N^4}}$;  we show that $\smash{\norm{FF^*-\tfrac{N}{M}\rmI}_\HS^2>\tfrac{2}{M^3}}$.  Since $F$ is $\varepsilon$-OP, there exists a nontrivial partition $\cI\sqcup\cJ=\set{1,\ldots,N}$ such that $\abs{\ip{f_i}{f_j}}<\varepsilon$ for every $i\in\cI$, $j\in\cJ$.  Define $F_\cI:=\set{f_i}_{i\in\cI}$ and $F_\cJ:=\set{f_j}_{j\in\cJ}$.  The frame operator $\smash{F_\cI F_\cI^*}$ has eigenvalues $\set{\lambda_{\cI,m}}_{m=1}^M$ and eigenvectors $\set{e_{\cI,m}}_{m=1}^M$, and similarly for $\smash{F_\cJ F_\cJ^*}$.  Without loss of generality, we arrange both sets of eigenvalues in decreasing order.  Take $\smash{\lambda':=\tfrac{1}{M^4N}}$, and define $M_\cI:=\#\set{m:\lambda_{\cI,m}\geq\lambda'}$, and similarly for $M_\cJ$.  We know $M_\cI\geq1$, since otherwise we have a contradiction:
\begin{equation*}
1\leq\abs{\cI}
=\Tr(F_\cI^*F_\cI)
=\Tr(F_\cI F_\cI^*)
=\sum_{m=1}^M\lambda_{\cI,m}
<M\lambda'
=\tfrac{1}{M^3N}
<1.
\end{equation*}
Similarly, $M_\cJ\geq1$.  Moreover, we claim $M_\cI+M_\cJ\leq M$.  Indeed, if not, then $\smash{\mathrm{Span}\set{e_{\cI,m}}_{m=1}^{M_\cI}\cap\mathrm{Span}\set{e_{\cJ,m}}_{m=1}^{M_\cJ}}$ has positive dimension, and so we may find a unit vector $u$ in this subspace.  Since $e_{\cI,m}$ is an eigenvector of $F_\cI F_\cI^*$ with eigenvalue $\lambda_{\cI,m}$, we have
\begin{equation*}
u
=\sum_{m=1}^{M_\cI}\ip{u}{e_{\cI,m}}e_{\cI,m}
=\sum_{m=1}^{M_\cI}\ip{u}{e_{\cI,m}}\tfrac{1}{\lambda_{\cI,m}}\sum_{i\in\cI}\ip{e_{\cI,m}}{f_i}f_i,
\end{equation*}
and we have a similar expression with $\cJ$.  Therefore, we apply the triangle inequality to get
\begin{align*}
1
&=\abs{\ip{u}{u}}^2
=\biggabs{\biggip{\sum_{m=1}^{M_\cI}\ip{u}{e_{\cI,m}}\tfrac{1}{\lambda_{\cI,m}}\sum_{i\in\cI}\ip{e_{\cI,m}}{f_i}f_i}{\sum_{m=1}^{M_\cJ}\ip{u}{e_{\cJ,m}}\tfrac{1}{\lambda_{\cJ,m}}\sum_{j\in\cJ}\ip{e_{\cJ,m}}{f_j}f_j}}\\
&\leq\sum_{i\in\cI}\sum_{m=1}^{M_\cI}\sum_{j\in\cJ}\sum_{m'=1}^{M_\cJ}\tfrac{\abs{\ip{f_i}{f_j}}}{\lambda_{\cI,m}\lambda_{\cJ,m'}}\abs{\ip{u}{e_{\cI,m}}}\abs{\ip{e_{\cI,m}}{f_i}}\abs{\ip{u}{e_{\cJ,m'}}}\abs{\ip{e_{\cJ,m'}}{f_j}}\\
&\leq\tfrac{\varepsilon}{(\lambda')^2}\sum_{i\in\cI}\biggparen{\sum_{m=1}^{M_\cI}\abs{\ip{u}{e_{\cI,m}}}\abs{\ip{e_{\cI,m}}{f_i}}}\sum_{j\in\cJ}\biggparen{\sum_{m=1}^{M_\cJ}\abs{\ip{u}{e_{\cJ,m}}}\abs{\ip{e_{\cJ,m}}{f_j}}},
\end{align*}
where the last inequality comes from $\abs{\ip{f_i}{f_j}}\leq\varepsilon$ and $\lambda_{\cI,m},\lambda_{\cJ,m'}\geq\lambda'$.  From here, we use $\tfrac{\varepsilon}{(\lambda')^2}=\tfrac{1}{N^2}$ and Holder's inequality to get
\begin{equation*}
1
\leq\tfrac{1}{N^2}\sum_{i\in\cI}\biggparen{\sum_{m=1}^{M_\cI}\abs{\ip{u}{e_{\cI,m}}}^2}^\frac{1}{2}\biggparen{\sum_{m=1}^{M_\cI}\abs{\ip{e_{\cI,m}}{f_i}}^2}^\frac{1}{2}\sum_{j\in\cJ}\biggparen{\sum_{m=1}^{M_\cJ}\abs{\ip{u}{e_{\cJ,m}}}^2}^\frac{1}{2}\biggparen{\sum_{m=1}^{M_\cJ}\abs{\ip{e_{\cJ,m}}{f_j}}^2}^\frac{1}{2}
\leq\tfrac{1}{N^2}\abs{\cI}\abs{\cJ}
\leq\tfrac{1}{4},
\end{equation*}
a contradiction.  As a partial summary, we know $M_\cI$ and $M_\cJ$ are nonzero and $M_\cI+M_\cJ\leq M$.  Now,
\begin{equation*}
\abs{\cI}
=\Tr(F_\cI^*F_\cI)
=\Tr(F_\cI F_\cI^*)
=\sum_{m=1}^M\lambda_{\cI,m}
=\sum_{m=1}^{M_\cI}\lambda_{\cI,m}+\!\!\!\sum_{m=M_\cI+1}^M\!\!\!\lambda_{\cI,m},
\end{equation*}
where $\sum_{m=M_\cI+1}^M\lambda_{\cI,m}<(M-M_\cI)\lambda'$.  Therefore, $\sum_{m=1}^{M_\cI}\lambda_{\cI,m}>\abs{\cI}-(M-M_\cI)\lambda'$, and so Jensen's inequality gives
\begin{equation}
\label{equation.not orthogonal 1}
\sum_{m=1}^{M_\cI}\lambda_{\cI,m}^2
\geq\tfrac{1}{M_\cI}\biggparen{\sum_{m=1}^{M_\cI}\lambda_{\cI,m}}^2
>\tfrac{1}{M_\cI}\Bigparen{\abs{\cI}-(M-M_\cI)\lambda'}^2
\geq\tfrac{\abs{\cI}^2}{M_\cI}-\tfrac{2\lambda'\abs{\cI}(M-M_\cI)}{M_\cI},
\end{equation}
and similarly for $\cJ$.  We now consider the frame potential of $F$:
\begin{equation*}
\FP(F)
=\Tr\bigbracket{(FF^*)^2}=\Tr\bigbracket{(F_\cI F_\cI^*+F_\cJ F_\cJ^*)^2}
=\Tr\bigbracket{(F_\cI F_\cI^*)^2}+\Tr\bigbracket{(F_\cJ F_\cJ^*)^2}+2\Tr\bigbracket{F_\cI F_\cI^*F_\cJ F_\cJ^*}.
\end{equation*}
Since $\Tr\bigbracket{F_\cI F_\cI^*F_\cJ F_\cJ^*}=\norm{F_\cI^*F_\cJ}_\HS^2\geq0$, we continue:
\begin{equation}
\label{equation.not orthogonal 2}
\FP(F)
\geq\sum_{m=1}^{M_\cI}\lambda_{\cI,m}^2+\sum_{m=1}^{M_\cJ}\lambda_{\cJ,m}^2\\
>\tfrac{\abs{\cI}^2}{M_\cI}+\tfrac{\abs{\cJ}^2}{M_\cJ}-2\lambda'\Bigbracket{\tfrac{\abs{\cI}(M-M_\cI)}{M_\cI}+\tfrac{\abs{\cJ}(M-M_\cJ)}{M_\cJ}},
\end{equation}
where the last inequality is by \eqref{equation.not orthogonal 1}.  Moreover, considering $M_\cI+M_\cJ\leq M$, we have
\begin{equation}
\label{equation.not orthogonal 3}
\tfrac{\abs{\cI}^2}{M_\cI}+\tfrac{\abs{\cJ}^2}{M_\cJ}
\geq\tfrac{\abs{\cI}^2}{M_\cI}+\tfrac{\paren{N-\abs{\cI}}^2}{M-M_\cI}
=\tfrac{N^2}{M}+\tfrac{\paren{\abs{\cI}M-M_\cI N}^2}{MM_\cI(M-M_\cI)}
\geq\tfrac{N^2}{M}+\tfrac{4}{M^3},
\end{equation}
where the last inequality uses the fact that $M$ and $N$ are relatively prime---that is, $\abs{\cI}M-M_\cI N$ is a nonzero integer---and $M_\cI(M-M_\cI)\leq\tfrac{M^2}{4}$.  Also, since $M_\cI,M_\cJ\geq1$, we have
\begin{equation}
\label{equation.not orthogonal 4}
\tfrac{\abs{\cI}(M-M_\cI)}{M_\cI}+\tfrac{\abs{\cJ}(M-M_\cJ)}{M_\cJ}
\leq(M-1)\bigparen{\abs{\cI}+\abs{\cJ}}
\leq MN.
\end{equation}
Therefore, combining \eqref{equation.not orthogonal 2}, \eqref{equation.not orthogonal 3} and \eqref{equation.not orthogonal 4} gives $\FP(F)>\tfrac{N^2}{M}+\tfrac{2}{M^3}$, meaning $\smash{\norm{FF^*-\tfrac{N}{M}\rmI}_\HS^2>\tfrac{2}{M^3}}$.
\end{proof}

Note that Theorem~\ref{theorem.not orthogonal} requires sufficient tightness to guarantee that $F$ is not $\bigparen{\tfrac{1}{M^8N^4}}$-othogonally partitionable.  Since gradient descent only decreases the frame potential, Theorem~\ref{theorem.not orthogonal} will apply to every subsequent iteration.  Therefore, by Theorem~\ref{theorem.linear convergence}, gradient descent converges to a UNTF in the relatively prime case:

\begin{cor}
\label{corollary.relatively prime}
Suppose $M$ and $N$ are relatively prime.  Pick $t\in(0,\tfrac{1}{2N})$, take $F_0\in\mathbb{S}_M^N$ satisfying $\norm{F_0F_0^*-\tfrac{N}{M}\rmI}_\HS^2\leq\tfrac{2}{M^3}$, and iterate $F_{k+1}:=F_k(t)$ as in~\eqref{equation.definition of gradient descent 1} and~\eqref{equation.definition of gradient descent 2}.  Then, $F_\infty:=\lim_kF_k$ exists and is a unit norm tight frame satisfying
\begin{equation*}
\norm{F_\infty-F_0}_\HS\leq\tfrac{4M^{20}N^{8.5}}{1-2Nt}\bignorm{F_0F_0^*-\tfrac{N}{M}\rmI}_\HS.
\end{equation*}
\end{cor}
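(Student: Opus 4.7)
The plan is to chain Theorem~\ref{theorem.not orthogonal} with Theorem~\ref{theorem.linear convergence}: the former supplies a uniform lower bound $\varepsilon=\tfrac{1}{M^8N^4}$ preventing $\varepsilon$-orthogonal partitionability at every iterate, and the latter then delivers linear convergence to a UNTF within the quantitative Paulsen distance of $F_0$.

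First, I would verify that the hypothesis $\norm{F_0F_0^*-\tfrac NM\rmI}_\HS^2\leq\tfrac{2}{M^3}$ implies the mild tightness assumption $\norm{F_0F_0^*-\tfrac NM\rmI}_\HS\leq\tfrac{N}{2M}$ required by Theorem~\ref{theorem.linear convergence}. This reduces to checking $MN^2\geq 8$, which holds in all nontrivial cases (the degenerate cases such as $M=N=1$ are trivial, since $F_0$ is already a UNTF). Second---and this is the key monotonicity observation---Theorem~\ref{theorem.gradient of the frame potential}, specifically the inequality~\eqref{equation.gradient descent 2}, guarantees $\FP(F_{k+1})\leq\FP(F_k)$ for every $k$, since $t\in(0,\tfrac{1}{2N})$ makes the coefficient $-4t(1-2Nt)$ nonpositive. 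Combined with the identity $\norm{FF^*-\tfrac NM\rmI}_\HS^2=\FP(F)-\tfrac{N^2}M$, this yields
\begin{equation*}
\bignorm{F_kF_k^*-\tfrac NM\rmI}_\HS^2\leq\bignorm{F_0F_0^*-\tfrac NM\rmI}_\HS^2\leq\tfrac{2}{M^3}\qquad\text{for all }k\geq 0.
\end{equation*}

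Third, with the distance-from-tightness hypothesis of Theorem~\ref{theorem.not orthogonal} in force for every iterate, we conclude that no $F_k$ is $\varepsilon$-orthogonally partitionable for $\varepsilon:=\tfrac{1}{M^8N^4}$. This is precisely the uniform-in-$k$ non-$\varepsilon$-OP condition required by the final statement of Theorem~\ref{theorem.linear convergence}. Applying that theorem therefore yields both the existence of $F_\infty=\lim_kF_k$ as a UNTF and the bound
\begin{equation*}
\norm{F_\infty-F_0}_\HS\leq\tfrac{4M^4N^{1/2}}{(1-2Nt)\varepsilon^2}\bignorm{F_0F_0^*-\tfrac NM\rmI}_\HS.
\end{equation*}

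Finally, substituting $\varepsilon^2=\tfrac{1}{M^{16}N^8}$ converts the numerator to $4M^4N^{1/2}\cdot M^{16}N^8=4M^{20}N^{8.5}$, which matches the claimed estimate. The entire argument is a straightforward bookkeeping composition of the two preceding results; there is no genuine analytic obstacle here, since all the hard work---the spectral-gap estimate bounding the gradient from below (Theorem~\ref{theorem.delta}), the linear-rate iteration bound (Theorem~\ref{theorem.linear convergence}), and the combinatorial/number-theoretic lower bound using relative primeness of $M$ and $N$ to force a jump in frame potential (Theorem~\ref{theorem.not orthogonal})---has already been carried out. The only potentially subtle point is ensuring the distance-from-tightness hypothesis propagates to every iterate, which is guaranteed by the gradient-descent monotonicity noted above.
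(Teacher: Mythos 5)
Your proof is correct and follows the same chain of reasoning the paper itself sketches: frame-potential monotonicity from Theorem~\ref{theorem.gradient of the frame potential} propagates the tightness hypothesis to every iterate, which by Theorem~\ref{theorem.not orthogonal} forces every $F_k$ to be non-$\varepsilon$-OP with $\varepsilon=\tfrac{1}{M^8N^4}$, and then the final clause of Theorem~\ref{theorem.linear convergence} delivers convergence and the explicit constant upon substituting $\varepsilon^2=\tfrac{1}{M^{16}N^8}$. Your check that $MN^2\geq 8$ (so that the $\tfrac{2}{M^3}$ hypothesis implies $\tfrac{N}{2M}$) is a detail the paper leaves implicit; it is correct, since nontrivially $M\geq 2$ and $N\geq M+1\geq 3$ when $\gcd(M,N)=1$.
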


This solves the Paulsen problem \eqref{equation.our version of Paulsen} in the case where $M$ and $N$ are relatively prime.  To be explicit, taking $t=\tfrac{1}{4N}$, we have $\smash{\delta=2^\frac{1}{2}M^{-\frac{3}{2}}}$, $\smash{C=8M^{20}N^{8.5}}$, and $\alpha=1$.  These constants are roughly comparable to those previously given in~\cite{BodmannC:10}, which were obtained using independent methods.  As noted earlier, $\alpha=1$ is the best one can hope for in any case.  In the next subsection, we give an example that shows that these techniques fall apart in the case where $M$ and $N$ share a common divisor, and moreover, that in such cases, we must set our sights lower with respect to $\alpha$.

\subsection{Case II: $M$ and $N$ are not relatively prime}

We continue our solution to the Paulsen problem in the remaining case where $M$ and $N$ are not relatively prime.  Let's begin this case with an example in two dimensions:

\begin{exmp}
Take some real $F\in\mathbb{S}_2^N$, that is, $F=\set{(\cos\theta_n,\sin\theta_n)}_{n=1}^N$ for some collection of $\theta_n$'s.  In this case, it is known~\cite{GoyalKK:01} that $F$ is tight precisely when the sum of $\set{(\cos2\theta_n,\sin2\theta_n)}_{n=1}^N$ vanishes.  In fact, one can show that
\begin{equation*}
\FP(F)-\tfrac{N^2}{2}
=\biggparen{\sum_{n=1}^N\cos^2\theta_n}^2+2~\biggparen{\sum_{n=1}^N\cos\theta_n\sin\theta_n}^2+\biggparen{\sum_{n=1}^N\sin^2\theta_n}^2-\tfrac{N^2}{2}
=\tfrac{1}{2}\biggbracket{\biggparen{\sum_{n=1}^N\cos2\theta_n}^2+\biggparen{\sum_{n=1}^N\sin2\theta_n}^2},
\end{equation*}
and so $\smash{\norm{FF^*-\tfrac{N}{2}\rmI}_\HS=\frac{1}{\sqrt{2}}\norm{\sum_{n=1}^N\paren{\cos2\theta,\sin2\theta}}}$.  That is, given any unit vectors in $\mathbb{R}^2$, double their polar angles, and add the resulting vectors, base-to-tip; for this chain of vectors, the distance between its head and tail is proportional to the original vectors' distance from tightness.  In particular, our physical intuition tells us that if a collection of unit vectors is close to being tight, then their double-angle counterparts must only be slightly perturbed in order to close their chain, meaning the original vectors are indeed close to a UNTF.  But how close?  To begin to answer this question, consider the following example:
\begin{equation}
\label{equation.example 1}
F(\theta):=\biggbracket{\begin{array}{rrrr}\cos\theta&\cos\theta&0&0\\\sin\theta&-\sin\theta&1&1\end{array}},
\qquad\qquad
\tilde{F}(\theta):=\biggbracket{\begin{array}{rrrr}\cos\tfrac\theta2&\cos\tfrac\theta2&-\sin\tfrac\theta2&\sin\tfrac\theta2\\\sin\tfrac\theta2&-\sin\tfrac\theta2&\cos\tfrac\theta2&\cos\tfrac\theta2\end{array}}.
\end{equation}
One can show that $\smash{\norm{F(\theta)F^*(\theta)-\tfrac{N}{2}\rmI}_\HS^2=8\sin^4\theta}$, while $\smash{\sum_{n=1}^N\norm{P_n(\theta)F(\theta)F^*(\theta)f_n(\theta)}^2=32\sin^6\theta\cos^2\theta}$.  That said, unlike in~\eqref{equation.alpha and beta}, there is no factor $A$ independent of $\theta$ such that $A\norm{F(\theta)F^*(\theta)-\tfrac{N}{2}\rmI}_\HS^2\leq\sum_{n=1}^N\norm{P_n(\theta)F(\theta)F^*(\theta)f_n(\theta)}^2$ for all $\theta$.  Therefore, at the very least, our analysis of the gradient descent algorithm, given in the previous section, must be refined in order to guarantee convergence.

Nevertheless, in this example, we can show that gradient descent does, in fact, converge to a UNTF, albeit at a sublinear rate.  Here, $g_1(\theta)=4\cos\theta\sin^3\theta(-\sin\theta,\cos\theta)$, $g_2(\theta)=-4\cos\theta\sin^3\theta(\sin\theta,\cos\theta)$, and $g_3(\theta)=g_4(\theta)=0$.  Recalling Proposition~\ref{proposition.Taylor's theorem}, one can show that $F(\theta;t)=F(\theta-4t\cos\theta\sin^3\theta)$.  That is, each iteration transforms an arrangement of angle $\theta$ into a new arrangement with angle $\theta-4t\cos\theta\sin^3\theta$; repeated iterations indeed converge to $\theta=0$, albeit very slowly.  In this way, gradient descent converges to $\set{e_1,e_1,e_2,e_2}$, that is, two copies of the standard basis, which is indeed a UNTF.  Note that since the limiting frame is OP, we know that for each $\varepsilon>0$, the $F_k$'s eventually become $\varepsilon$-OP---this is why the linear rate of convergence guaranteed by Theorem~\ref{theorem.linear convergence} does not hold here.

This same example can be used to give a baseline on answers to the Paulsen problem in the non-relatively-prime case.  Indeed, noting that every real UNTF in $\mathbb{S}_2^4$ is the union of two orthonormal bases, we can show that for each $\theta\in[0,\tfrac{\pi}{8}]$, $\tilde{F}(\theta)$ is the closest UNTF to $F(\theta)$.  But, $\smash{\norm{\tilde{F}(\theta)-F(\theta)}_\HS=4\sin\tfrac\theta4}$, which is on the order of the square-root of $\smash{\norm{F(\theta)F^*(\theta)-\tfrac{N}{2}\rmI}_\HS^\frac{1}{2}}$ as $\theta$ grows small.  As such, \eqref{equation.example 1} is a counterexample to the sometimes-voiced belief that distance from a UNTF is at worst a linear function of distance from tightness.  In other words, recalling \eqref{equation.our version of Paulsen}, $\alpha=1$ is not possible for every $M$ and $N$; even when $M=2$ and $N=4$, the best possible $\alpha$ is $\frac12$.  This leads to three important questions: 1) For a given $M$ and $N$, is the version of the Paulsen problem given in~\eqref{equation.our version of Paulsen} even solvable? 2) If so, what is the best possible $\alpha$ for a given $M$ and $N$? 3) Is there a single $\alpha$ that works for all $M$ and $N$, or does performance truly depend on the number of common factors between $M$ and $N$?  Below, we outline an argument that answers the first question in the affirmative; the second and third questions remain open.
\end{exmp}

As the preceeding example illustrated, gradient descent is not guaranteed to converge in the non-relatively-prime case, since there is no $\varepsilon$ for which iterations never become $\varepsilon$-OP.  To resolve this issue, we introduce the concept of ``jumping'' to a nearby OP unit norm frame:

\begin{thm}
\label{theorem.close orthogonal}
Let $\varepsilon\in(0,\tfrac{1}{2M}]$. Then, for every $\varepsilon$-orthogonally partitionable $F\in\mathbb{S}_M^N$, there exists an orthogonally partitionable $\tilde{F}\in\mathbb{S}_M^N$ such that $\smash{\norm{\tilde{F}-F}_\HS\leq(2N)^{\frac{1}{2}}(M\varepsilon)^{\frac{1}{3}}}$.
\end{thm}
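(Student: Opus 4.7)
The plan is to use the $\varepsilon$-OP partition $\cI\sqcup\cJ$ to find a genuine orthogonal decomposition $\bbH_M=V\oplus V^\perp$ that nearly separates the two subcollections, then project and normalize to build $\tilde F$. Write $A:=F_\cI F_\cI^*$ with eigenvalues $\lambda_1\geq\cdots\geq\lambda_M$ and orthonormal eigenbasis $\set{e_m}$. Two spectral bounds follow from the hypotheses: the trace identity gives $\lambda_1\geq (\sum_m\lambda_m)/M=|\cI|/M$, and for any $j\in\cJ$,
\begin{equation*}
\lambda_M\leq\ip{Af_j}{f_j}=\norm{F_\cI^*f_j}^2=\sum_{i\in\cI}\abs{\ip{f_i}{f_j}}^2\leq|\cI|\varepsilon^2.
\end{equation*}
Since $\varepsilon\leq\tfrac{1}{2M}$, these combine to give $\lambda_M\leq|\cI|\varepsilon^2<|\cI|/M\leq\lambda_1$, so the interval $(\lambda_M,\lambda_1]$ is nonempty. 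For any $\mu$ in this interval, the dominant eigenspace $V:=\mathrm{span}\set{e_m:\lambda_m\geq\mu}$ satisfies $1\leq\dim V\leq M-1$.

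Let $P$ be the orthogonal projection onto $V$. Define $\tilde f_i:=Pf_i/\norm{Pf_i}$ for $i\in\cI$ (choosing any unit vector of $V$ in the degenerate case $Pf_i=0$) and $\tilde f_j:=(I-P)f_j/\norm{(I-P)f_j}$ for $j\in\cJ$ analogously; then $\tilde F\in\bbS_M^N$ with $\ip{\tilde f_i}{\tilde f_j}=0$ for all $i\in\cI$, $j\in\cJ$, so $\tilde F$ is OP through the same partition. A direct Pythagorean computation gives the general inequality $\norm{\tilde u-u}^2=2(1-\sqrt{1-\norm{u-P_W u}^2})\leq 2\norm{u-P_Wu}^2$ for any unit $u$ and subspace $W$. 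Applied here, the $\cI$-block is controlled by the spectral identity
\begin{equation*}
\sum_{i\in\cI}\norm{f_i-Pf_i}^2=\sum_{m>\dim V}\lambda_m\leq(M-\dim V)\mu\leq M\mu,
\end{equation*}
while the $\cJ$-block uses $\norm{Pf_j}^2=\sum_{m\leq\dim V}\abs{\ip{f_j}{e_m}}^2\leq\mu^{-1}\ip{Af_j}{f_j}\leq|\cI|\varepsilon^2/\mu$, which summed over $j\in\cJ$ yields $|\cI||\cJ|\varepsilon^2/\mu\leq N^2\varepsilon^2/(4\mu)$. Combining gives
\begin{equation*}
\norm{\tilde F-F}_\HS^2\leq 2M\mu+\tfrac{N^2\varepsilon^2}{2\mu}.
\end{equation*}

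It remains to choose $\mu$. The AM--GM optimum $\mu^*=N\varepsilon/(2\sqrt{M})$ minimizes the right-hand side at $2N\sqrt{M}\varepsilon$, and the stated bound $\norm{\tilde F-F}_\HS^2\leq 2N(M\varepsilon)^{2/3}$ then follows from $\sqrt{M}\varepsilon\leq M^{2/3}\varepsilon^{2/3}$ (equivalently $\varepsilon^{1/3}\leq M^{1/6}$), which is implied by $\varepsilon\leq\tfrac{1}{2M}$. The main technical obstacle is verifying that $\mu^*$ lies in the valid interval $(\lambda_M,\lambda_1]$: the lower bound $\mu^*>\lambda_M$ is immediate from the spectral estimates above together with $\varepsilon\leq\tfrac{1}{2M}$, but the upper bound $\mu^*\leq\lambda_1$ can fail when $|\cI|$ is small relative to $N\sqrt{M}\varepsilon/2$. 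That regime is handled separately---either by swapping the roles of $\cI$ and $\cJ$ (ensuring WLOG $|\cI|\geq N/2$), or by taking $\mu=\lambda_1$ and exploiting the resulting concentrated spectrum of $F_\cI$ to bound the two error contributions directly---which forms the bookkeeping core of the argument.
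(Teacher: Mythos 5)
Your proof is essentially correct and follows the same skeleton as the paper's—use the $\varepsilon$-OP partition to define a spectral-cutoff subspace $V$ of the frame operator $F_\cI F_\cI^*$, project each block onto $V$ or $V^\perp$ and renormalize, then control the two errors via the Pythagorean inequality $\norm{f-Pf/\norm{Pf}}^2\leq 2\norm{(I-P)f}^2$—but your treatment of the $\cJ$-block is genuinely cleaner and sharper. Where the paper expands each $e_{\cI,m}$ through the eigenvector identity $e_{\cI,m}=\lambda_{\cI,m}^{-1}\sum_{i\in\cI}\ip{e_{\cI,m}}{f_i}f_i$ and runs a Cauchy--Schwarz argument that costs a $(\lambda')^{-2}$ factor, you note directly the Markov-type bound $\norm{Pf_j}^2\leq\mu^{-1}\ip{F_\cI F_\cI^*f_j}{f_j}=\mu^{-1}\norm{F_\cI^*f_j}^2\leq\mu^{-1}\abs{\cI}\varepsilon^2$, which only costs $\mu^{-1}$. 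Optimizing the threshold then yields the stronger estimate $\norm{\tilde F-F}_\HS^2\leq 2N\sqrt{M}\,\varepsilon$, from which the stated $2N(M\varepsilon)^{2/3}$ bound follows as you observe. The paper instead plugs in a fixed threshold $\lambda'=\tfrac{2N}{3}(\varepsilon^2/M)^{1/3}$ tuned to its weaker $\cJ$-block estimate, which is why it lands exactly on the $2/3$ power rather than the linear-in-$\varepsilon$ bound you obtain.

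The one place you hedge unnecessarily is the verification that $\mu^*=N\varepsilon/(2\sqrt{M})$ lies in $(\lambda_M,\lambda_1]$. You correctly identify that taking $\abs{\cI}\geq\abs{\cJ}$ WLOG (which the paper also does) resolves it, but you present this as a speculative alternative rather than carrying it through. It does work cleanly: with $\abs{\cI}\geq N/2$, the trace identity gives $\lambda_1\geq\abs{\cI}/M\geq N/(2M)$, while $\varepsilon\leq 1/(2M)$ gives $\mu^*=N\varepsilon/(2\sqrt{M})\leq N/(4M^{3/2})\leq N/(2M)\leq\lambda_1$. For the lower end, $\lambda_M\leq\abs{\cI}\varepsilon^2\leq N\varepsilon^2$, and $N\varepsilon^2<N\varepsilon/(2\sqrt{M})=\mu^*$ exactly when $\varepsilon<1/(2\sqrt{M})$, which $\varepsilon\leq 1/(2M)$ guarantees for $M\geq 2$ (the case $M=1$ is vacuous: there $\abs{\ip{f_i}{f_j}}=1$ always, so no frame is $\varepsilon$-OP with $\varepsilon\leq\tfrac{1}{2}$). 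Committing to this would complete the proof without needing the second, vaguer alternative you float.
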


\begin{proof}
We first claim that for every unit vector $f\in\bbH_M$ and every nonzero projection operator $P$ on $\bbH_M$, there exists a unit vector $g\in P(\bbH_M)$ such that $\norm{f-g}^2\leq2\norm{(I-P)f}^2$.  If $Pf=0$, we may take $g$ to be any unit vector in $P(\bbH_M)$,  since that would mean $\norm{f-g}^2=2=2\norm{f}^2=2\norm{(I-P)f}^2$.Otherwise, we take $g=\tfrac{Pf}{\norm{Pf}}$, since
\begin{equation*}
\bignorm{f-\tfrac{Pf}{\norm{Pf}}}^2
=\bignorm{Pf+(I-P)f-\tfrac{Pf}{\norm{Pf}}}^2
=\bignorm{\bigparen{1-\tfrac{1}{\norm{Pf}}}Pf+(I-P)f}^2,
\end{equation*}
and so the Pythagorean theorem gives
\begin{equation}
\label{equation.pure orthogonal 1}
\bignorm{f-\tfrac{Pf}{\norm{Pf}}}^2
=\bigparen{1-\tfrac{1}{\norm{Pf}}}^2\norm{Pf}^2+\norm{(I-P)f}^2
=2\bigparen{1-\norm{Pf}}
\leq2(1-\norm{Pf}^2)
=2\norm{(I-P)f}^2.
\end{equation}
For simplicity, we take $g:=\tfrac{Pf}{\norm{Pf}}$, understanding what this means when $Pf=0$.

Since $F$ is $\varepsilon$-OP, we have $\cI\sqcup\cJ=\set{1,\ldots,N}$ such that $\abs{\ip{f_i}{f_j}}<\varepsilon$ whenever $i\in\cI$ and $j\in\cJ$.  Without loss of generality, we take $\abs{\cI}\geq\abs{\cJ}$.  Defining $F_\cI:=\set{f_i}_{i\in\cI}$, the frame operator $F_\cI F_\cI^*$ has eigenvalues $\set{\lambda_{\cI,m}}_{m=1}^M$, arranged in decreasing order, and eigenvectors $\set{e_{\cI,m}}_{m=1}^M$.  Take $\smash{\lambda':=\tfrac{2N}{3}\bigparen{\tfrac{\varepsilon^2}{M}}^\frac{1}{3}}$, and define $M_\cI:=\#\set{m:\lambda_{\cI,m}\geq\lambda'}$.  We know $M_\cI\geq1$, since otherwise
\begin{equation*}
\tfrac{N}{2}\leq\abs{\cI}
=\Tr(F_\cI^*F_\cI)
=\Tr(F_\cI F_\cI^*)
=\sum_{m=1}^M\lambda_{\cI,m}
<M\lambda'
=\tfrac{2N}{3}\bigparen{M\varepsilon}^\frac{2}{3}
\leq\tfrac{2^\frac{1}{3}N}{3}
<\tfrac{N}{2}.
\end{equation*}
Therefore, $P:=\sum_{m=1}^{M_\cI}e_{\cI,m}e_{\cI,m}^*$ is a nonzero projection operator on $\bbH_M$.  Moreover,
\begin{equation}
\label{equation.pure orthogonal 2}
\sum_{i\in\cI}\norm{(I-P)f_i}^2
=\sum_{i\in\cI}\sum_{m=M_\cI+1}^M\!\!\!\abs{\ip{f_i}{e_{\cI,m}}}^2
=\!\!\!\sum_{m=M_\cI+1}^M\!\!\!\norm{F_\cI^*e_{\cI,m}}^2
=\!\!\!\sum_{m=M_\cI+1}^M\!\!\!\ip{F_\cI F_\cI^*e_{\cI,m}}{e_{\cI,m}}
=\!\!\!\sum_{m=M_\cI+1}^M\!\!\!\lambda_{\cI,m}
<M\lambda'.
\end{equation}
Also, the fact that $e_{\cI,m}$ is an eigenvector of $F_\cI F_\cI^*$ with eigenvalue $\lambda_{\cI,m}$ gives
\begin{equation*}
\sum_{j\in\cJ}\norm{Pf_j}^2
=\sum_{j\in\cJ}\sum_{m=1}^{M_\cI}\abs{\ip{f_j}{e_{\cI,m}}}^2
=\sum_{j\in\cJ}\sum_{m=1}^{M_\cI}\biggabs{\biggip{f_j}{\tfrac{1}{\lambda_{\cI,m}}\sum_{i\in\cI}\ip{e_{\cI,m}}{f_i}f_i}}^2
\leq\sum_{j\in\cJ}\sum_{m=1}^{M_\cI}\tfrac{1}{\lambda_{\cI,m}^2}\biggparen{\sum_{i\in\cI}\abs{\ip{e_{\cI,m}}{f_i}}\abs{\ip{f_i}{f_j}}}^2.
\end{equation*}
Continuing, we use $\abs{\ip{f_i}{f_j}}\leq\varepsilon$ and $\lambda_{\cI,m}\geq\lambda'$:
\begin{equation}
\label{equation.pure orthogonal 3}
\sum_{j\in\cJ}\norm{Pf_j}^2
\leq\tfrac{\varepsilon^2}{(\lambda')^2}\sum_{j\in\cJ}\sum_{m=1}^{M_\cI}\biggparen{\sum_{i\in\cI}\abs{\ip{e_{\cI,m}}{f_i}}}^2
\leq\tfrac{\varepsilon^2}{(\lambda')^2}\abs{\cI}\sum_{i\in\cI}\sum_{j\in\cJ}\sum_{m=1}^{M_\cI}\abs{\ip{e_{\cI,m}}{f_i}}^2
\leq\tfrac{\varepsilon^2}{(\lambda')^2}\abs{\cI}^2\abs{\cJ}
\leq\tfrac{4N^3\varepsilon^2}{27(\lambda')^2},
\end{equation}
where the last inequality comes from $\abs{\cI}^2(N-\abs{\cI})\leq\tfrac{4N^3}{27}$.  Define $\tilde{F}=\set{\tilde{f}_n}_{n=1}^N$ by $\smash{\tilde{f}_n=\frac{Pf_n}{\norm{Pf_n}}}$ when $n\in\cI$, and $\smash{\tilde{f}_n=\frac{(I-P)f_n}{\norm{(I-P)f_n}}}$ when $n\in\cJ$.  Then, combining \eqref{equation.pure orthogonal 1} with \eqref{equation.pure orthogonal 2} and \eqref{equation.pure orthogonal 3} gives the result:
\begin{equation*}
\norm{\tilde{F}-F}_\HS^2
=\sum_{i\in\cI}\bignorm{f_i-\tfrac{Pf_i}{\norm{Pf_i}}}^2+\!\sum_{j\in\cJ}\bignorm{f_j-\tfrac{(I-P)f_j}{\norm{(I-P)f_j}}}^2
\leq\sum_{i\in\cI}2\norm{(I-P)f_i}^2+\!\sum_{j\in\cJ}2\norm{Pf_j}^2
<2M\lambda'+\tfrac{8N^3\varepsilon^2}{27(\lambda')^2}
=2N(M\varepsilon)^\frac{2}{3}.
\qedhere
\end{equation*}
\end{proof}

The previous result tells us how far we must jump in order to transform an $\varepsilon$-OP frame into one that is exactly OP.  This opens the door for the following procedure for producing UNTFs in the non-relatively-prime case: given a collection of unit norm vectors and fixing any $\varepsilon\in(0,1]$, perform gradient descent until one's vectors become $\varepsilon$-OP, at which jump to a OP frame, and then repeat this procedure on each of the two subframes.  In the following result, we use Theorems~\ref{theorem.linear convergence} and~\ref{theorem.close orthogonal} to bound how far this procedure will take us from our original frame.

\begin{thm}
\label{theorem.not relatively prime}
Suppose $M$ and $N$ are not relatively prime.
Take $F\in\mathbb{S}_M^N$ such that $\norm{FF^*-\tfrac{N}{M}\rmI}_\HS\leq(2^{21}M^{27}N^{14})^{-1}$.
Then there exists $\tilde{F}\in\mathbb{S}_M^N$, which is either a unit norm tight frame or is orthogonally partitionable, with equal redundancies in each of the two partitioned subspaces, such that
\begin{equation}
\label{equation.untf or op}
\norm{\tilde{F}-F}_\HS\leq3M^\frac{6}{7}N^\frac{1}{2}\bignorm{FF^*-\tfrac{N}{M}\rmI}_\HS^\frac{1}{7}.
\end{equation}
\end{thm}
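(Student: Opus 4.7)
\begin{pf}[Proof plan]
The plan is to combine Theorems~\ref{theorem.linear convergence} and~\ref{theorem.close orthogonal} by running gradient descent until the iterates either converge or become $\varepsilon$-orthogonally partitionable, and then, in the latter case, to ``jump'' to a nearby exactly-OP unit norm frame. Write $\eta:=\norm{FF^*-\tfrac{N}{M}\rmI}_\HS$, fix the step size $t:=\tfrac{1}{4N}$ (so that $1-2Nt=\tfrac12$), and fix a threshold $\varepsilon\in(0,\tfrac{1}{2M}]$ to be chosen. Apply \eqref{equation.definition of gradient descent 1}--\eqref{equation.definition of gradient descent 2} to $F_0:=F$ to generate the sequence $\set{F_k}_{k=0}^\infty$. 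There are two cases, depending on whether any iterate is $\varepsilon$-OP.

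In Case~A, no iterate is $\varepsilon$-OP. Then Theorem~\ref{theorem.linear convergence} applies for every $k$, so $F_\infty:=\lim_k F_k$ exists, is a UNTF, and satisfies $\norm{F_\infty-F_0}_\HS\leq\tfrac{4M^4 N^{1/2}}{(1-2Nt)\varepsilon^2}\eta=\tfrac{8M^4 N^{1/2}}{\varepsilon^2}\eta$; take $\tilde{F}:=F_\infty$. In Case~B, let $K$ be the first index for which $F_K$ is $\varepsilon$-OP. Theorem~\ref{theorem.linear convergence} still applies to the iterates $F_0,\ldots,F_{K-1}$, so $\norm{F_K-F_0}_\HS\leq\tfrac{8M^4 N^{1/2}}{\varepsilon^2}\eta$, and Theorem~\ref{theorem.close orthogonal} then yields an OP $\tilde{F}\in\mathbb{S}_M^N$ with $\norm{\tilde{F}-F_K}_\HS\leq(2N)^{1/2}(M\varepsilon)^{1/3}$. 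A triangle inequality gives, in either case,
\begin{equation*}
\norm{\tilde{F}-F}_\HS\leq\tfrac{8M^4 N^{1/2}}{\varepsilon^2}\eta+(2N)^{1/2}(M\varepsilon)^{1/3}.
\end{equation*}

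The first term is decreasing in $\varepsilon$ while the second is increasing; balancing the exponents of $\varepsilon$ leads to the choice $\varepsilon=cM^{11/7}\eta^{3/7}$ for a numerical constant $c$. This substitution factors $M^{6/7}N^{1/2}\eta^{1/7}$ out of both terms, leaving $\bigparen{8c^{-2}+\sqrt{2}\,c^{1/3}}M^{6/7}N^{1/2}\eta^{1/7}$, and the remaining one-variable minimization over $c$ yields a numerical constant strictly below $3$. The hypothesis $\eta\leq(2^{21}M^{27}N^{14})^{-1}$ is far stronger than what is needed to guarantee both $\varepsilon\leq\tfrac{1}{2M}$ (hypothesis of Theorem~\ref{theorem.close orthogonal}) and $\eta\leq\tfrac{N}{2M}$ (hypothesis of Theorem~\ref{theorem.linear convergence}), and is also large enough room to absorb the constant $c$ in these checks.

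There remains the claim of equal redundancies in Case~B, which is the most delicate step. Writing the $\varepsilon$-OP partition of $F_K$ as $\cI\sqcup\cJ$ and letting $M_\cI$ be the dimension of the range of $\tilde{F}_\cI$ produced by Theorem~\ref{theorem.close orthogonal}, the argument is that $F_K$ inherits near-tightness from the linear decrease \eqref{equation.linear convergence 2}, so $F_K F_K^*$ is close to $\tfrac{N}{M}\rmI$, while $\varepsilon$-OP-ness makes the cross-term $F_{K,\cI}F_{K,\cJ}^*$ small in operator norm; hence $F_{K,\cI}F_{K,\cI}^*$ restricted to $\mathrm{span}\,\tilde{F}_\cI$ is close to $\tfrac{N}{M}\rmI_{M_\cI}$. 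Taking traces gives $\abs{\cI}\approx\tfrac{N}{M}M_\cI$; since $M_\cI$ is an integer, choosing $\eta$ small enough (as permitted by the hypothesis) forces equality $M_\cI=\tfrac{M}{N}\abs{\cI}$, and symmetrically $M_\cJ=\tfrac{M}{N}\abs{\cJ}$, giving $\tfrac{\abs{\cI}}{M_\cI}=\tfrac{N}{M}=\tfrac{\abs{\cJ}}{M_\cJ}$ as required.

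The main obstacle is the integrality argument in the last paragraph: one needs quantitative perturbation bounds on the eigenvalues of $F_{K,\cI}F_{K,\cI}^*$ as a function of $\eta$ and $\varepsilon$ which, combined with the choice $\varepsilon=cM^{11/7}\eta^{3/7}$, produce a trace error strictly less than $\tfrac{N}{M}$ and hence pin $M_\cI$ to the unique nearby integer. Tracking constants through this step is what drives the appearance of the very small polynomial prefactor $(2^{21}M^{27}N^{14})^{-1}$ in the hypothesis; the rest of the proof is a choice-of-parameters exercise.
\end{pf}
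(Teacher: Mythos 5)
Your argument for the distance estimate \eqref{equation.untf or op} is essentially the paper's: the same case split on whether the iterates ever become $\varepsilon$-OP, the same triangle inequality combining Theorems~\ref{theorem.linear convergence} and~\ref{theorem.close orthogonal}, and the same choice $\varepsilon=cM^{11/7}\eta^{3/7}$; with $c=2^{3/2}3^{3/7}$ (the exact minimizer you mention) the constant $8c^{-2}+\sqrt{2}c^{1/3}$ evaluates to $7\cdot3^{-6/7}<3$, which is the paper's bound.

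Where you diverge is the equal-redundancy claim, and that is precisely where your plan has a gap. The paper does not go through eigenvalue perturbation of $F_{K,\cI}F_{K,\cI}^*$ at all. Instead it argues entirely at the level of the frame potential of the \emph{jumped} frame $\tilde F$: one has
\begin{equation*}
\bigabs{\FP(\tilde F)-\FP(F)}
=\bigabs{\Tr\bigbracket{(\tilde F\tilde F^*-FF^*)(\tilde F\tilde F^*+FF^*)}}
\leq\norm{\tilde F-F}_\HS\bigparen{\norm{\tilde F}_\HS+\norm{F}_\HS}\bigparen{\norm{\tilde F}_\HS^2+\norm{F}_\HS^2}
=4N^{3/2}\norm{\tilde F-F}_\HS,
\end{equation*}
which, combined with \eqref{equation.untf or op} and the hypothesis on $\norm{FF^*-\tfrac NM\rmI}_\HS$, yields $\FP(\tilde F)\leq\tfrac{N^2}{M}+\tfrac{3}{M^3}$. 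On the other side, since $\tilde F$ is OP into subframes spanning $M_\cI$- and $(M-M_\cI)$-dimensional subspaces, Jensen (as in the proof of Theorem~\ref{theorem.not orthogonal}) gives
\begin{equation*}
\FP(\tilde F)=\FP(\tilde F_\cI)+\FP(\tilde F_\cJ)
\geq\tfrac{\abs{\cI}^2}{M_\cI}+\tfrac{(N-\abs{\cI})^2}{M-M_\cI}
=\tfrac{N^2}{M}+\tfrac{(\abs{\cI}M-M_\cI N)^2}{MM_\cI(M-M_\cI)}
\geq\tfrac{N^2}{M}+\tfrac{4}{M^3}(\abs{\cI}M-M_\cI N)^2.
\end{equation*}
If the integer $\abs{\cI}M-M_\cI N$ were nonzero this would force $\FP(\tilde F)\geq\tfrac{N^2}{M}+\tfrac{4}{M^3}$, contradicting the upper bound; hence $\abs{\cI}M=M_\cI N$.

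Compared to your sketch, this route is both complete and cheaper: it reuses machinery already in place (Jensen on the subframes, a crude Lipschitz estimate on $\FP$) and argues directly about $\tilde F$, whereas your eigenvalue-perturbation route requires a new quantitative lemma --- you would need to show $\|PF_KF_K^*P-\tfrac{N}{M}P\|$ and the cross-term $\|PF_{K,\cJ}F_{K,\cJ}^*P\|$ are small with explicit constants, and then translate that to a statement about $\tilde F_\cI$ rather than $F_{K,\cI}$. You would also be applying integrality to the quantity $\abs{\cI}M-M_\cI N$, not to $M_\cI$ alone as your phrasing suggests (nothing forces $\tfrac{M}{N}\abs{\cI}$ to be an integer in advance). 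Your route could likely be pushed through, but it duplicates effort the FP argument avoids, and as written it is not yet a proof.
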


\begin{proof}
Take $t:=\tfrac{1}{4N}$ and $\smash{\varepsilon:=2^\frac{3}{2}3^\frac{3}{7}M^\frac{11}{7}\norm{FF^*-\tfrac{M}{N}\rmI}_\HS^\frac{3}{7}}$.  According to Theorem~\ref{theorem.linear convergence}, gradient descent will converge to a UNTF, provided iterations never become $\varepsilon$-OP.  In this way, we either converge to a UNTF $\tilde{F}$, or produce an $\varepsilon$-OP frame within $(2N)^\frac{1}{2}(M\varepsilon)^\frac{1}{3}$ of an OP frame $\tilde{F}$, by Theorem~\ref{theorem.close orthogonal}.Either way, Theorems~\ref{theorem.linear convergence} and~\ref{theorem.close orthogonal} give
\begin{equation*}
\norm{\tilde{F}-F}_\HS
\leq\tfrac{8M^4N^\frac{1}{2}}{\varepsilon^2}\bignorm{FF^*-\tfrac{N}{M}\rmI}_\HS+(2N)^\frac{1}{2}(M\varepsilon)^\frac{1}{3}
=3^{-\frac{6}{7}}7M^\frac{6}{7}N^\frac{1}{2}\bignorm{FF^*-\tfrac{N}{M}\rmI}_\HS^\frac{1}{7},
\end{equation*}
which proves \eqref{equation.untf or op}.  Now suppose $\tilde{F}$ is OP.  Since
\begin{align*}
\bigabs{\FP(\tilde{F})-\FP(F)}
&=\Tr\bigbracket{(\tilde{F}\tilde{F}^*-FF^*)(\tilde{F}\tilde{F}^*+FF^*)}\\
&\leq\norm{\tilde{F}\tilde{F}^*-FF^*}_\HS\norm{\tilde{F}\tilde{F}^*+FF^*}_\HS\\
&\leq\norm{\tilde{F}-F}_\HS\Bigparen{\norm{\tilde{F}}_\HS+\norm{F}_\HS}\Bigparen{\norm{\tilde{F}}_\HS^2+\norm{F}_\HS^2},
\end{align*}
we use $\norm{F}_\HS^2=\norm{\tilde{F}}_\HS^2=N$ to get $\abs{\FP(\tilde{F})-\FP(F)}\leq4N^\frac{3}{2}\norm{\tilde{F}-F}_\HS$.  Therefore,
\begin{equation*}
\FP(\tilde{F})
\leq\FP(F)+\bigabs{\FP(\tilde{F})-\FP(F)}
=\tfrac{N^2}{M}+\bignorm{FF^*-\tfrac{N}{M}\rmI}_\HS^2+\bigabs{\FP(\tilde{F})-\FP(F)}
\leq\tfrac{N^2}{M}+\bignorm{FF^*-\tfrac{N}{M}\rmI}_\HS^2+4N^\frac{3}{2}\norm{\tilde{F}-F}_\HS.
\end{equation*}
Continuing, we apply \eqref{equation.untf or op} and use the fact that $\norm{FF^*-\tfrac{N}{M}\rmI}_\HS^2\leq4N^\frac{3}{2}\bigparen{3M^\frac{6}{7}N^\frac{1}{2}\norm{FF^*-\tfrac{N}{M}\rmI}_\HS^\frac{1}{7}}$:
\begin{equation}
\label{equation.below threshold}
\FP(\tilde{F})
\leq\tfrac{N^2}{M}+\bignorm{FF^*-\tfrac{N}{M}\rmI}_\HS^2+4N^\frac{3}{2}\biggparen{3M^\frac{6}{7}N^\frac{1}{2}\bignorm{FF^*-\tfrac{N}{M}\rmI}_\HS^\frac{1}{7}}
\leq\tfrac{N^2}{M}+\tfrac{24M^\frac{6}{7}N^2}{(2^{21}M^{27}N^{14})^\frac{1}{7}}
=\tfrac{N^2}{M}+\tfrac{3}{M^3}.
\end{equation}
Since $\tilde{F}$ is OP, there exists an orthogonal partition $\cI\sqcup\cJ=\set{1,\ldots,N}$.  Take $M_\cI$ to be the dimension of the span of $\set{f_n}_{n\in\cI}$.  Then,
\begin{align*}
\FP(\tilde{F})
=\FP(\tilde{F}_\cI)+\FP(\tilde{F}_\cJ)
\geq\tfrac{\abs{\cI}^2}{M_\cI}+\tfrac{\paren{N-\abs{\cI}}^2}{M-M_\cI}
=\tfrac{N^2}{M}+\tfrac{\paren{\abs{\cI}M-M_\cI N}^2}{MM_\cI(M-M_\cI)}.
\end{align*}
In particular, if $\abs{\cI}M-M_\cI N\neq0$, then $\bigparen{\abs{\cI}M-M_\cI N}^2\geq1$, and since $M_\cI(M-M_\cI)\leq\tfrac{M}{4}$, we would have $\smash{\FP(\tilde{F})\geq\tfrac{N^2}{M}+\frac{4}{M^3}}$.  Considering \eqref{equation.below threshold}, we may conclude that $\abs{\cI}M-M_\cI N=0$, and so $\tfrac{N}{M}=\tfrac{\abs{\cI}}{M_\cI}=\frac{N-\abs{\cI}}{M-M_\cI}$.
\end{proof}

Repeated applications of Theorem~\ref{theorem.not relatively prime} will provide solutions, albeit inelegant ones, to the Paulsen problem given in~\eqref{equation.our version of Paulsen}.  To elaborate, Theorem~\ref{theorem.not relatively prime} states that if a unit norm frame $F$ is sufficiently tight, then there exists a unit norm $\tilde{F}$ such that $\smash{\norm{\tilde{F}-F}_\HS=\mathrm{O}(\norm{FF^*-\frac NM\rmI}^{\frac17})}$ which is either a UNTF or is OP into components of equal redundancy.  Since we are done if $\tilde{F}$ happens to be a UNTF, let's focus on the case where $\tilde{F}$ is OP, that is, when $\tilde{F}=\tilde{F}_{\cI}\oplus\tilde{F}_{\cJ}$, where $\tilde{F}_\cI=\set{\tilde{f}_i}_{i\in\cI}$ and $\tilde{F}_\cJ=\set{\tilde{f}_j}_{j\in\cJ}$ are frames for some $M_\cI$- and $M_\cJ$-dimensional subspaces of $\bbH_M$, respectively, and $\smash{\frac{\abs{\cI}}{M_{\cI}}=\frac{\abs{\cJ}}{M_{\cJ}}=\frac NM}$.  We then apply Theorem~\ref{theorem.not relatively prime} to $\tilde{F}_{\cI}$ and $\tilde{F}_{\cJ}$: if each is close to a UNTF, these can be directly summed to form a UNTF which is close to $\tilde{F}$ and in turn, to $F$; if either is OP, we must continue this process in lower-dimensional subspaces.  At most $M$ such nested applications of Theorem~\ref{theorem.not relatively prime} are necessary, since each reduces the dimension of the space in consideration by at least $1$.  The main issue is that each application of Theorem~\ref{theorem.not relatively prime} comes at a terrible cost: ``jumping" from an $\varepsilon$-OP sequence to an OP sequence can increase one's frame potential by a constant multiple of the jump distance.  In particular, with each application of Theorem~\ref{theorem.not relatively prime}, one's distance from tightness may be effectively raised to a $\frac17$ power; when one's distance is very small, this exponentiation results in a dramatic increase in distance.  When applied $M$ times in succession, one would therefore expect a net exponent of $\smash{\frac1{7^M}}$.  That is, we expect that there exists an extremely small $\delta>0$ and an extremely large $C$ for which~\eqref{equation.our version of Paulsen} will hold for $\smash{\alpha=\frac1{7^M}}$.  It is unknown whether such an $M$-dependent $\alpha$ is inherent to this problem, or simply a consequence of a weak argument on our part.

We emphasize that such issues, while of great mathematical interest, should cause little worry in real-world applications.  Indeed, the ``perform gradient descent and jump when approaching OP" method that we employed in the proof of Theorem~\ref{theorem.not relatively prime} produces UNTFs which, for all practical purposes, are close to their originals.  Nevertheless, the issue stands: this distance may not be a nice function of the tightness itself.  Indeed, this is the heart of the part of the Paulsen problem that remains open: ``Given a unit norm frame which is extremely close to being tight, and is also extremely close to being OP, how far away, as a function of tightness, is the nearest UNTF?''  This problem reveals our current lack of understanding of the geometry of the set of all UNTFs on very small neighborhoods of OP UNTFs, and is more than worthy of additional study.

\section*{Acknowledgments}
Casazza was supported by NSF DMS 0704216 and 1008183.  Fickus was supported by AFOSR F1ATA09125G003.  The views expressed in this article are those of the authors and do not reflect the official policy or position of the United States Air Force, Department of Defense, or the U.S. Government.

\end{document}